\newtheorem{theorem}{Theorem}[section]
\newtheorem{corollary}[theorem]{Corollary}
\newtheorem{definition}[theorem]{Definition}
\newtheorem{proposition}[theorem]{Proposition}
\newtheorem{example}[theorem]{Example}
\newtheorem{remark}[theorem]{Remark}
\numberwithin{equation}{section}
\newcounter{newlist}
\newcounter{nnnewlist}
\newcounter{nelist}
\def\tX{\tilde{X}}
\def\EE{\mathbb{E}^{\cp}}
\def\lt{\left}
\def\rt{\right}
\def\lu{\underline{\mu}}
\def\ou{\overline{\mu}}
\def\ls{\underline{\sigma}}
\def\os{\overline{\sigma}}
\def\e{\mathbb{E}}
\def\eg{{\mathbb{E}_G}}
\def\vp{\varphi}
\def\ve{\varepsilon}
\def\e{\mathbb{E}}
\def\br{\mathbb{R}}
\def\cs{\mathcal{S}}
\def\cp{\mathcal{P}}
\def\cf{\mathcal{F}}
\def\cb{\mathcal{B}}
\def\cf{\mathcal{F}}
\def\cn{\mathcal{N}}
\def\cg{\mathcal{G}}
\def\lu{\underline{\mu}}
\def\ou{\overline{\mu}}
\def\ls{\underline{\sigma}}
\def\os{\overline{\sigma}}
\def\be{\hat{\mathbb{E}}}
\def\bn{\mathbb{N}}
\def\pe{{\mathbb{E}^{\cp}}}
\def\eg{\mathbb{E}_G}
\def\cn{\mathcal{N}}
\def\vp{\varphi}
\def\ve{\varepsilon}
\begin{document}

\title{Functional central limit theorem with mean-uncertainty under sublinear expectation}

 \author{Xiaofan GUO\ \ \ Xinpeng LI \thanks{%Partially supported by National Key R\&D Program of China (No. 2018YFA0703900), NSF of China (No.11601281), the Young Scholars Program of Shandong University.\
Corresponding author:  Xinpeng LI, Email: lixinpeng@sdu.edu.cn}\\
\\ Reserach Center for Mathematics and Interdisciplinary Sciences\\Shandong University\\266237, Qingdao, China
}
\date{ }

\maketitle

\noindent\textbf{Abstract.}
In this paper, we introduce a fundamental  model for independent and identically distributed sequence with model uncertainty on the canonical space $(\mathbb{R}^\bn,\mathcal{B}(\br^\bn))$ via probability kernels. Thanks to the well-defined upper and lower variances, we obtain a new functional central limit theorem with mean-uncertainty on the canonical space by the method based on the martingale central limit theorem and stability of stochastic integral in the classical probability theory. Then we extend it to the general sublinear expectation space through a new representation theorem. Our results generalize Peng's central limit theorem with zero-mean to the case of mean-uncertainty and provides a purely probabilistic proof instead of the existing nonlinear partial differential equation approach. As an application, we consider the two-armed bandit problem and generalize the corresponding central limit theorem from the case of mean-certainty to mean-uncertainty.\\

\noindent\textbf{Keywords:} Canonical space, Central limit theorem, Independence and identical distribution, Mean-uncertainty, Sublinear expectation, Upper and lower variances\\

%\ead{peng@sdu.edu.cn}
%\address{Research Center for Mathematics and Interdisciplinary Sciences; Frontiers Science Center for Nonlinear Expectations (Ministry of Education),  Shandong University, 266237, Qingdao, China\\
%School of Mathematics, Shandong University, 250100, Jinan, China }

%%%%%%%%%%%%%%%%%%%%%%%%%%%%%%%%%%%%%%%%%%%%%%
%%%% Main text entry area:
\section{Introduction}

The notions of independence and identical distribution are crucial in the probability and statistics.
In classical probability theory, we usually resort to product measure $\mu=\otimes_{i=1}^\infty\mu_i$ to construct the independent canonical sequence $\{X_i\}_{i\in\bn}$ on the canonical space $(\br^\bn,\cb(\br^\bn))$ with prescribed marginal laws $\{\mu_i\}_{i\in\bn}$. Generally, any joint law $\mu$ on $(\br^\bn,\cb(\br^\bn))$ can be decomposed in terms of its probability kernels in the following form (see Yan \cite{yan}): $\forall n\in\bn$, $ \forall A\in\cb(\br^n)$,
$$\mu(A\times\br^{\bn-n})=\int_\br\mu_1(dx_1)\int_{\br}\kappa_{2}(x_1,dx_2)\cdots\int_\br I_{A}\kappa_n(x_1,\cdots,x_{n-1},dx_{n}).$$
In particular, if the probability kernel $\kappa_i(x_1,\cdots,x_{i-1},\cdot)$ dose not rely on  $(x_1,\cdots,x_{i-1})$ for all $i\geq 2$, the canonical sequence $\{X_i\}_{i\in\bn}$ is independent under $\mu$. If further assume that $\kappa_i(x_1,\cdots,x_{i-1},\cdot)\equiv\mu_1(\cdot)$, then $\{X_i\}_{i\in\bn}$ is independent and identically distributed (i.i.d.). This formulation inspires us to introduce a fundamental model on canonical space $(\br^\bn,\cb(\br^\bn))$ via probability kernels such that the canonical sequence $\{X_i\}_{i\in\bn}$ is independent and identically distributed with model uncertainty as described in Peng \cite{P2019}, i.e., we construct a set of probability measures generated by probability kernels. Such formulation provides a new interpretation of i.i.d. sequence on sublinear expectation space.

The i.i.d. assumption is usually set in the central limit theorem (CLT). The new notion of i.i.d. sequence on the sublinear expectation space $(\Omega,\mathcal{H},\be)$ was initially introduced in Peng \cite{P2019b} and  the corresponding CLT with zero-mean was established in Peng \cite{P08,pengsur,P09,P2019,P2019b}, known as Peng's CLT:

For an i.i.d. sequence $\{X_i\}_{i\in\bn}$ with $\be[X_1]=\be[-X_1]=0$ and
\begin{equation}\label{uc}
\lim_{\lambda\rightarrow\infty}\mathbb{\hat{E}}[(|X_1|^2-\lambda)^+]=0,
\end{equation}
we have
$$\lim_{n\rightarrow\infty}\mathbb{\hat{E}}\lt[\vp\lt(\frac{\sum_{i=1}^nX_i}{\sqrt{n}}\rt)\rt]=\eg[\vp(\xi)], \ \ \forall \vp\in C_{b.Lip}(\br),$$
where $\eg$ is the $G$-expectation corresponding to a $G$-normally distributed random variable $\xi$ with zero-mean and upper and lower variances $\be[X_1^2]$ and $-\be[-X_1^2]$, which is characterized by the so-called $G$-heat equation. The corresponding proof adopted the partial differential equation (PDE) approach which is mainly based on the deep result of the interior regularity for PDE (see Krylov \cite{Kry2}). We emphasize that the regularity of $\be$ (see Definition \ref{reg}) is not necessary here. {For instance, in the case of only finitely additive $\be$, the above CLT still holds, but the classical probability theory is not applicable since there does not exist any probability measure such that underlying random variables are i.i.d. (see Example \ref{ex1}).}

After Peng established CLT on sublinear expectation space, Zhang \cite{Zhang} obtained the sufficient and necessary conditions of CLT for i.i.d. random variables, which weakened the condition (\ref{uc}) to $\lim_{\lambda\rightarrow\infty}\lambda\mathbb{V}(|X_1|^2\geq\lambda)=0$, where $\mathbb{V}$ is the capacity introduced by $\mathbb{\hat{E}}$. Peng \cite{P09} proposed a new proof of CLT by weakly compact method but involved PDE approach to characterize the $G$-normal distribution. Song \cite{song2} provided an estimate of the convergence rate for CLT by Stein's method, and Krylov \cite{Kry1} gave error estimates in CLT for not necessarily non-degenerate case by the finite-difference approximations for Bellman's equation. Besides, convergence to the $G$-normal distribution also occurs for non-independent random variables or non-identical distributions (see Li \cite{Li2015}, Li and Shi \cite{LS}, Zhang \cite{zhang2020}). Zhang \cite{Z2015} also considered the functional CLT based on Peng's CLT in \cite{P2019}. We note that almost all papers on CLT under sublinear expectation assume that the underlying random variables are zero-mean and most of proofs rely on PDE approach.

{An interesting problem is that how about CLT for the random variables with mean-uncertainty? Recently, Chen and Epstein \cite{CE} proved a CLT for random variables with mean-uncertainty and unambiguous conditional variance, where the limit is defined by a backward stochastic differential equation (see Pardoux and Peng \cite{PP}, Peng \cite{P97}).  One important assumption in \cite{CE} is that all measures in the given set $\cp$  are equivalent on the filtration, while such assumption is not necessary in our paper. Fang et al. \cite{FPSS} also obtained a CLT with mean-uncertainty convergent to the classical normal distribution under additional assumptions.}

Motivated by the weak  approximation of $G$-expectations introduced by Dolinsky et al. \cite{DNS}, we establish the functional CLT on the canonical space $(\br^\bn,\mathcal{B}(\br^\bn), \cp)$, where $\cp$ is a set of probability measures introduced via probability kernels with model uncertainty. The proof relies on the martingale central limit theorem and stability of stochastic integral in the classical probability theory.
%The CLTs obtained in this paper are in the functional forms, and also take the mean-uncertainty into consideration
The determination of parameters in our CLTs are due to the well-defined upper and lower variances. It is easy to extend functional CLT from the canonical space to the general sublinear expectation space by the representation theorem. Thus we provide a new and purely probabilistic proof of Peng's CLT, then generalize it to the mean-uncertainty case. We note that the corresponding limit distribution is still $G$-normally distributed. This new CLT illustrates the broad applicability of $G$-normal distribution for the situations with mean-uncertainty. For instance, Chen et al. \cite{CEZ} studies the two-armed bandit problem from the perspective of sublinear expectations and obtains a CLT in the case of mean-certainty. Generalizing such result to the case of mean-uncertainty, we establish a functional CLT for two-armed bandit taking $G$-normal distribution as the limitation, as an application of our main result.

This paper also provides a new methodology to study limit theorems on general sublinear expectation space. We firstly establish desired limit theorems on the canonical space $(\mathbb{R}^\bn,\mathcal{B}(\br^\bn),\cp)$ applying the classical martingale limit theorems. Thanks to a new representation theorem established in this paper, we could extend them to the general sublinear expectation space $(\Omega,\mathcal{H},\be)$.

The remainder of this paper is organized as follows. Section 2 describes the fundamental model for i.i.d. sequence on canonical space $(\br^\bn,\cb(\br^\bn))$ introduced by probability kernels with model uncertainty. The corresponding functional CLT with mean-uncertainty are established in Section 3. In Section 4, we extend functional CLT from canonical space to the general sublinear expectation space. Some applications and examples are provided in Section 5. The law of large numbers, which is used in the proof of functional CLT, is given in Appendix.

\section{Fundamental model on canonical space}

\subsection{Independent sequence on canonical space}
Let $(\br^\bn,\cb(\br^\bn))$ be the canonical space with the metric $d(\boldsymbol{x}, \boldsymbol{y})=\sum_{i=1}^{\infty}\left(\left|x_i-y_i\right| \wedge 1\right) / 2^i$. Suppose that $\{X_i\}_{i\in\bn}$ is the sequence of canonical random variables defined by $X_i(\omega)=\omega_i$ for $\omega=(\omega_1,\cdots,\omega_n,\cdots)\in\br^\bn$. For each $i\in\bn$, $\cp_i$ is the convex and weakly compact set of probability measures on $(\br,\mathcal{B}(\br))$ characterizing the uncertainty of the distribution of $X_i$. We define a set of joint laws on $(\br^\bn,\mathcal{B}(\br^\bn))$ via the probability kernels as follows:
\begin{equation}
\begin{array}
[c]{l}
\mathcal{P}=\{P\ \text{is\ probability\ measure\ on } (\br^{\bn},\cb(\br^\bn))\ \text{such\ that }
\\P(A^{(n)}\times\br^{\bn-n})=\int_\br\mu_1(dx_1)\int_{\br}\kappa_{2}(x_1,dx_2)\cdots\int_\br I_{A^{(n)}}\kappa_n(x_1,\cdots,x_{n-1},dx_{n})\\
\forall n\geq 1, \ \forall A^{(n)}\in\cb(\br^n),\ \ \kappa_i(x_1,\cdots,x_{i-1},\cdot)\in\cp_i,\ \ 2\leq i\leq n,\ \mu_1\in\cp_1\},
\end{array}
\label{e1}
\end{equation}
where for each $i\in\bn$, $\kappa_i(x_1,\cdots,x_{i-1}, dx_i)$ is the probability kernel satisfying:
\begin{itemize}
\item[(i)] $\forall (x_1,\cdots,x_{i-1})\in\br^{i-1}$, $\kappa_i(x_1,\cdots,x_{i-1},\cdot)$ is a probability measure on $(\br,\mathcal{B}(\br))$.
\item[(ii)] $\forall B\in\mathcal{B}(\br)$, $\kappa_i(\cdot, B)$ is $\mathcal{B}(\br^{i-1})$-measurable.
\end{itemize}
The existence of such $P$ on $(\br^\bn,\cb(\br^\bn))$ is shown by Ionescu-Tulcea theorem.

%We note that the set $\cp_i$ is independent of $(x_1,\cdots,x_{i-1})$, i.e., the uncertainty of the distributions of $X_i$ does not change for any realization of history $(x_1,\cdots,x_{i-1})$ of the random vector $(X_1,\cdots,X_{i-1})$, thus we call  $X_i$  independent of $(X_1,\cdots,X_{i-1})$. Here independence means the uncertainty of the distributions of $X_i$ is independent of random vector $(X_1,\cdots, X_{i-1})$. $\{X_i\}_{i\in\bn}$ can be regarded as the canonical stochastic process with discrete time, it is obvious that such independence is not symmetric in general. Furthermore, if the uncertainties of each $X_i$ are the same, i.e., $\cp_i=\cp_1$, $\forall i\geq 2$, we say that $\{X_i\}_{i\in\bn}$ is an i.i.d. sequence. In this case, $\cp$ is determined by the set $\cp_1$ of marginal laws for $X_1$.
We note that the uncertainty of the distributions of $X_i$ is independent of random vector $(X_1,\cdots,X_{i-1})$, in the sense of the set $\cp_i$ remaining unchanged for various realization of $(x_1,\cdots,x_{i-1})$ in (\ref{e1}). Such independence is obviously not symmetric in general. Furthermore, if the distributions of each $X_i$ are the same, i.e., $\cp_i=\cp_1$, $\forall i\geq 2$, we say that $\{X_i\}_{i\in\bn}$ is an i.i.d. sequence. In this case, $\cp$ is determined by the set $\cp_1$, characterizing the marginal laws for $X_1$.

\begin{remark}
In particular, if each $\cp_i=\{\mu_i\}$ is a singleton, the above construction degenerates into the classical procedure to construct independent sequence on the product space via the product measure $\mu=\otimes_{i=1}^\infty\mu_i$.
\end{remark}

For each $\mathcal{B}(\br^\bn)$-measurable random variable $X$, we introduce the upper expectation $\pe$ defined by
$$\mathbb{E}^{\cp}[X]:=\sup_{P\in\cp}E_P[X].$$

The canonical filtration $\{\cf_i\}_{i\in\bn}$ is defined as $\cf_i=\sigma(X_k, 1\leq k\leq i)$ with convention $\cf_0=\{\emptyset,\br^\bn\}$.

\subsection{Upper and lower variances with mean-uncertainty}

Let $\cp_0$ be a weakly compact and convex set of probability measures on $(\br,\cb(\br))$ and $X$ be the canonical random variable, i.e., $X(\omega)=\omega, \ \forall \omega\in\br$.
We define
$$\mathbb{E}^{\cp_0}[\vp(X)]=\sup_{P\in\cp_0}E_P[\vp], \ \ \forall \vp\in C(\br).$$

In order to deal with mean-uncertainty case, i.e., $\mathbb{E}^{\cp_0}[X]>-\mathbb{E}^{\cp_0}[-X]$, we need to define the proper upper and lower variances.

We denote $V_P(X)$ the classical variance of $X$ under probability measure $P$, then
$$V_P(X)=E_P[(X-E_P[X])^2]=\min_{\mu\in\br}E_P[(X-\mu)^2].$$
It is natural to introduce the upper and lower variances under sublinear expectation $\mathbb{E}^{\cp_0}$ instead of $E_P$ (see Walley \cite{walley} or Li et al. \cite{lly}).

\begin{definition}\label{uvc}
For the canonical random variable $X$ on $(\br,\cb(\br))$ with $\mathbb{E}^{\cp_0}[|X|^2]<\infty$, define the upper variance of $X$ to be
$$\overline{V}(X):=\min_{\lu\leq\mu\leq\ou}\{\mathbb{E}^{\cp_0}[(X-\mu)^2]\},$$
and the lower variance of $X$ to be
$$\underline{V}(X):=\min_{\lu\leq\mu\leq\ou}\{-\mathbb{{E}}^{\cp_0}[-(X-\mu)^2]\},$$
where $\ou=\mathbb{E}^{\cp_0}[X]$ and $\lu=-\mathbb{E}^{\cp_0}[-X]$.
\end{definition}

Motivated by Walley \cite{walley}, Li et al. \cite{lly} proved that the upper and lower variances can be represented by the envelope of classical variances.

\begin{proposition}\label{prop34}
\begin{equation}
\overline{V}(X)=\max_{P\in\cp_0}V_P(X). \label{ucc1}
\end{equation}
\begin{equation}
\underline{V}(X)=\min_{P\in\cp_0}V_P(X). \label{ucc2}
\end{equation}
\end{proposition}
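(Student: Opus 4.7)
The plan is to prove both identities by exploiting the classical decomposition $E_P[(X-\mu)^2] = V_P(X) + (\mu - E_P[X])^2$, which shows that for fixed $P\in\cp_0$ the map $\mu\mapsto E_P[(X-\mu)^2]$ is a convex quadratic whose unconstrained minimum sits at $\mu = E_P[X]\in[\lu,\ou]$. This identity places $V_P(X)$ on equal footing with the inner optimization in the definitions of $\overline{V}$ and $\underline{V}$, so both equalities will reduce to interchanging the outer $\min_\mu$ with either a $\sup_P$ or an $\inf_P$.

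For the lower variance, I would first rewrite the inner quantity as $-\mathbb{E}^{\cp_0}[-(X-\mu)^2] = \inf_{P\in\cp_0} E_P[(X-\mu)^2]$. Since both the outer minimum over $\mu\in[\lu,\ou]$ and the inner infimum over $P$ are infima, they commute freely, giving
$$\underline{V}(X) = \inf_{P\in\cp_0}\min_{\mu\in[\lu,\ou]} E_P[(X-\mu)^2].$$
For each $P\in\cp_0$ we have $E_P[X]\in[\lu,\ou]$, so the constrained minimum in $\mu$ is attained at $\mu = E_P[X]$ with value $V_P(X)$. Hence $\underline{V}(X) = \inf_{P\in\cp_0} V_P(X)$, and the weak compactness of $\cp_0$ together with the lower semicontinuity of $P\mapsto V_P(X)$ in the weak topology upgrades the infimum to a minimum.

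For the upper variance, the inequality $\overline{V}(X)\geq \max_{P\in\cp_0}V_P(X)$ is immediate from the decomposition: for every $\mu$ and $P$ one has $E_P[(X-\mu)^2] \geq V_P(X)$, hence $\sup_P E_P[(X-\mu)^2] \geq \sup_P V_P(X)$ for every $\mu$. The nontrivial reverse direction requires interchanging $\min_\mu$ with $\sup_P$, for which I would invoke Sion's minimax theorem: the interval $[\lu,\ou]$ is compact and convex, $\cp_0$ is weakly compact and convex, and $f(\mu,P) := E_P[(X-\mu)^2]$ is continuous and convex (quadratic) in $\mu$ for each $P$, and linear (hence concave) in $P$ for each $\mu$. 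Once the interchange is granted,
$$\overline{V}(X) = \sup_{P\in\cp_0}\min_{\mu\in[\lu,\ou]} E_P[(X-\mu)^2] = \sup_{P\in\cp_0} V_P(X),$$
again using that $E_P[X]\in[\lu,\ou]$ for $P\in\cp_0$.

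The main obstacle is verifying the topological hypothesis of Sion's theorem, namely the upper semicontinuity of $P\mapsto E_P[(X-\mu)^2]$ in the weak topology on $\cp_0$. Since $(X-\mu)^2$ is unbounded, this is not automatic from weak convergence of probability measures; one has to show that the family $\{X^2\}$ under $\cp_0$ is uniformly integrable, which follows from the hypothesis $\mathbb{E}^{\cp_0}[|X|^2]<\infty$ combined with the standard tightness arguments associated with weak compactness in this framework. A more direct alternative is to work with the convex function $g(\mu):=\sup_{P\in\cp_0} E_P[(X-\mu)^2]$ and exploit its subdifferential at the minimizer $\mu^*$: the optimality condition $0\in\partial g(\mu^*)$, combined with the convexity of $\cp_0$ (so that convex combinations of maximizing measures remain in $\cp_0$), yields a single $P^*\in\cp_0$ that attains the sup and satisfies $E_{P^*}[X]=\mu^*$, whence $V_{P^*}(X)=g(\mu^*)=\overline{V}(X)$ and the identity follows without recourse to any minimax theorem.
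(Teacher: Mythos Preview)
Your proposal is correct and matches the paper's approach: the paper does not give a self-contained argument but attributes the result to Walley \cite{walley} and Li et al.\ \cite{lly}, and the Remark immediately following the proposition confirms that Sion's minimax theorem (under the weak compactness and convexity of $\cp_0$) is exactly the intended mechanism for \eqref{ucc1}, while \eqref{ucc2} holds even without those hypotheses---consistent with your observation that the two infima commute for free. Your flagging of the upper-semicontinuity of $P\mapsto E_P[(X-\mu)^2]$ as the delicate point is apt, and your alternative subdifferential route is a reasonable way to sidestep it.
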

\begin{remark}
The weak compactness and convexity of $\cp_0$ ensure that the minimax theorem in Sion \cite{sion} holds and (\ref{ucc1}) follows.
For some general $\cp_0$ without such assumption, (\ref{ucc2}) remains true but (\ref{ucc1}) degenerates into the following inequality:
$$\overline{V}(X)\geq\sup_{P\in\cp_0}V_P(X).$$
\end{remark}

The following propositions will show great importance in the proof of the functional CLT.

\begin{proposition}\label{lem35}
For each constant $\sigma^2$ satisfying $\underline{V}(X_1)\leq\sigma^2\leq \overline{V}(X_1)$, there exists $P\in \cp_0$ such that $\sigma^2$ coincides with the variation of $X_1$ under $P$ , i.e.
\begin{equation}\label{conditional2}
E_P[|X_1-E_P[X_1]|^2]=\sigma^2.
\end{equation}
\end{proposition}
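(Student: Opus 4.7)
The plan is to combine Proposition \ref{prop34} with a convex-combination argument and the intermediate value theorem. By the weak compactness of $\cp_0$ and Proposition \ref{prop34}, I would first select $P_-, P_+\in\cp_0$ such that
\[
V_{P_-}(X_1)=\underline{V}(X_1), \qquad V_{P_+}(X_1)=\overline{V}(X_1).
\]
Since $\cp_0$ is convex, for each $t\in[0,1]$ the mixture $P_t:=(1-t)P_-+tP_+$ lies in $\cp_0$. The idea is then to slide $t$ from $0$ to $1$ and use continuity to hit the prescribed value $\sigma^2$.

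The key computation is that the map $t\mapsto V_{P_t}(X_1)$ is not only continuous but in fact a polynomial of degree at most $2$ in $t$. Indeed, using linearity of $E_{P_t}$ over mixtures and the integrability condition $\mathbb{E}^{\cp_0}[|X_1|^2]<\infty$,
\[
E_{P_t}[X_1^k]=(1-t)E_{P_-}[X_1^k]+tE_{P_+}[X_1^k],\qquad k=1,2,
\]
so that
\[
V_{P_t}(X_1)=E_{P_t}[X_1^2]-\bigl(E_{P_t}[X_1]\bigr)^2
\]
is continuous in $t$. Its values at the endpoints are $V_{P_0}(X_1)=\underline{V}(X_1)$ and $V_{P_1}(X_1)=\overline{V}(X_1)$, so by the intermediate value theorem there exists $t^\ast\in[0,1]$ with $V_{P_{t^\ast}}(X_1)=\sigma^2$, and $P:=P_{t^\ast}\in\cp_0$ is the measure we need.

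The only nontrivial obstacle is establishing the existence of the extremizers $P_\pm$, because $P\mapsto V_P(X_1)$ is not in general weakly continuous on $\cp_0$ (only lower semicontinuous through the $X_1^2$ term). I would handle this by invoking the variational formulation $\overline{V}(X_1)=\min_{\lu\le\mu\le\ou}\mathbb{E}^{\cp_0}[(X_1-\mu)^2]$ from Definition \ref{uvc} together with the duality identity $\overline{V}(X_1)=\max_{P\in\cp_0}V_P(X_1)$ of Proposition \ref{prop34}: standard arguments based on weak compactness of $\cp_0$, together with the uniform integrability of $\{X_1^2\}$ under $\cp_0$ coming from $\mathbb{E}^{\cp_0}[|X_1|^2]<\infty$ (applied via truncation $X_1^2\wedge N$), show that both the supremum and infimum are attained. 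Alternatively one could argue directly with approximating sequences $P_-^{(n)}, P_+^{(n)}$ and take convex combinations with a diagonal extraction, since the IVT conclusion persists in the limit. Once $P_\pm$ are in hand, the rest of the argument is essentially one line.
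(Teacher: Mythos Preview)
Your proposal is correct and follows essentially the same approach as the paper: pick extremizers $P_\pm$ via Proposition~\ref{prop34}, form convex combinations $P_t=(1-t)P_-+tP_+\in\cp_0$, note that $t\mapsto V_{P_t}(X_1)$ is a continuous (quadratic) function with endpoint values $\underline{V}(X_1)$ and $\overline{V}(X_1)$, and apply the intermediate value theorem. Your extended discussion of the attainment of the extremizers is unnecessary here, since Proposition~\ref{prop34} is stated with $\max$ and $\min$ (not $\sup$/$\inf$) and thus already guarantees their existence; incidentally, your claim that uniform integrability of $X_1^2$ follows from $\mathbb{E}^{\cp_0}[|X_1|^2]<\infty$ alone is not quite right, but since Proposition~\ref{prop34} handles attainment this does not affect the argument.
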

\begin{proof}
By Proposition \ref{prop34}, there exist probability measures $\overline{P}$ and $\underline{P}$ in $\cp_0$ such that $V_{\overline{P}}(X_1)=\overline{V}(X_1)$ and $V_{\underline{P}}(X_1)=\underline{V}(X_1)$ respectively. Denote $P$ as the convex combination of $\overline{P}$ and $\underline{P}$, i.e.
\begin{equation}\label{e36}
P(B)=c\underline{P}(B)+(1-c)\overline{P}(B),\ \ \forall B\in\mathcal{B}(\br),
\end{equation}
where $c$ is some constant in $[0,1]$. Then we have $P\in\cp_0$ by convexity, and
$$
\begin{aligned}
E_P[|X_1-E_P[X_1]|^2]&= c \underline{V}(X_1) + (1-c) \overline{V}(X_1) + c(1-c)\lt(E_{\overline{P}}[X_1]-E_{\underline{P}}[X_1]\rt)^2\\
&=: h(c).
\end{aligned}
$$
Noticing that the continuous function $h$ satisfies $h(0)\geq \sigma^2 \geq h(1)$, we can find $c_0 \in [0,1]$ such that $h(c_0) = \sigma^2$. Take $c=c_0$ in (\ref{e36}), then $P$ is the probability measure we need for (\ref{conditional2}) to hold.
\end{proof}

Let $\cp$ be constructed on $(\br^\bn,\cb(\br^\bn))$ by (\ref{e1}) with $\cp_i=\cp_0$ for $i\in\bn$, where $\cp_0$ is weakly compact and convex. Then the  canonical process  $\{X_i\}_{i\in\bn}$ is an i.i.d. sequence under $\cp$. In this and the next section, we further assume that
\begin{equation}\label{c2}
\lim_{\lambda\rightarrow\infty}\mathbb{E}^{\cp_0}[(|X_1|^{2}-\lambda)^+]=0.
\end{equation}
%where $X(\omega)=\omega, \ \forall \omega\in\br$.

%Let $\{X_i\}_{i\in\bn}$ be the  canonical process on the canonical space $(\br^\bn,\cb(\br^\bn))$ with $X_i(\omega)=\omega_i, \ \forall \omega=(\omega_1,\cdots,\omega_n,\cdots)\in\br^\bn$. The natural filtration $\{\cf_i\}_{i\in\bn}$ is defined by $\cf_i=\sigma(X_1,\cdots,X_i)$ with convention $\cf_0=\{\emptyset,\br^\bn\}$.

\begin{proposition}\label{lem1}
For each $P\in\cp$, let $\tilde{X}^P_i=X_i-E_P[X_i|\cf_{i-1}]$, then
\begin{equation}\label{conditional1}
\underline{V}(X_1)\leq E_P[|\tilde{X}^P_i|^2|\cf_{i-1}]\leq\overline{V}(X_1),\ \   P-\text{a.s.},  \  \ i\in \bn,
\end{equation}
%Furthermore, for fixed $n\in\bn$, let $V_i(x_1,\cdots, x_{i-1})$ be $\cf_{i-1}$-measurable function for $2\leq i\leq n$ and $V_1(x_0)$ be a constant satisfying
% $$\underline{V}(X_1)\leq V_i(x_1,\cdots,x_{i-1})\leq\overline{V}(X_1), 1\leq i\leq n,$$
%then there exists $P^*\in\cp$ such that, $\forall \omega=(x_1,\cdots,x_n,\cdots)\in\br^\bn$,
%\begin{equation}\label{cvv14}
%E_{P^*}[|\tilde{X}^{P^*}_i|^2|\cf_{i-1}](\omega)=V_i(x_1,\cdots,x_{i-1}),\ \  P^*-\text{a.s.}, \ \ 1\leq i\leq n.
%\end{equation}
\end{proposition}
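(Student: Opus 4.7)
The plan is to unfold what $P \in \cp$ means in terms of its kernels and observe that, with respect to $\cf_{i-1}$, the conditional law of $X_i$ is exactly one measure in $\cp_0$ (depending on $\omega$), so the conditional second moment under $P$ reduces to a classical variance under some member of $\cp_0$, to which Proposition \ref{prop34} applies.

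More concretely, fix $P\in\cp$. By the definition (\ref{e1}), there exist $\mu_1\in\cp_0$ and probability kernels $\{\kappa_j\}_{j\geq 2}$ with $\kappa_j(x_1,\dots,x_{j-1},\cdot)\in\cp_0$ such that the finite-dimensional laws of $P$ are obtained by successive integration against $\mu_1,\kappa_2,\dots,\kappa_n$. The case $i=1$ is degenerate: $\cf_0$ is trivial, $E_P[X_1\mid\cf_0]=E_{\mu_1}[X_1]$, and $E_P[|\tX_1^P|^2\mid\cf_0]=V_{\mu_1}(X_1)$, so Proposition~\ref{prop34} immediately gives $\underline{V}(X_1)\le V_{\mu_1}(X_1)\le\overline{V}(X_1)$. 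For $i\geq 2$, I would use the Ionescu-Tulcea construction to identify a regular conditional distribution of $X_i$ given $\cf_{i-1}$: for any bounded Borel $\vp$,
\begin{equation*}
E_P[\vp(X_i)\mid\cf_{i-1}](\omega)=\int_{\br}\vp(y)\,\kappa_i(X_1(\omega),\dots,X_{i-1}(\omega),dy),\quad P\text{-a.s.}
\end{equation*}
Write $Q_\omega:=\kappa_i(X_1(\omega),\dots,X_{i-1}(\omega),\cdot)$, so $Q_\omega\in\cp_0$ for $P$-a.e.\ $\omega$. Taking $\vp(y)=y$ and $\vp(y)=y^2$ (truncation arguments handle integrability via (\ref{c2})), one gets
\begin{equation*}
E_P[X_i\mid\cf_{i-1}](\omega)=E_{Q_\omega}[\mathrm{id}],\qquad E_P[|\tX_i^P|^2\mid\cf_{i-1}](\omega)=V_{Q_\omega}(\mathrm{id}).
\end{equation*}

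The last step is just to apply Proposition \ref{prop34} pointwise in $\omega$: since $Q_\omega\in\cp_0$,
\begin{equation*}
\underline{V}(X_1)=\min_{P'\in\cp_0}V_{P'}(X_1)\le V_{Q_\omega}(\mathrm{id})\le \max_{P'\in\cp_0}V_{P'}(X_1)=\overline{V}(X_1),
\end{equation*}
which gives (\ref{conditional1}) on a set of full $P$-measure. The only subtlety I anticipate is the usual measurability bookkeeping needed to turn the kernel $\kappa_i$ into a genuine regular conditional distribution and to justify exchanging the conditional expectation with the integral against $\kappa_i$; this is standard for Ionescu-Tulcea products once one notes that $\cf_{i-1}=\sigma(X_1,\dots,X_{i-1})$ is generated by the first $i-1$ coordinate projections, so no real obstacle arises beyond careful use of the kernel definition.
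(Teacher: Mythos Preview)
Your proposal is correct and follows essentially the same route as the paper: identify the regular conditional law of $X_i$ given $\cf_{i-1}$ as the kernel $\kappa_i(X_1(\omega),\dots,X_{i-1}(\omega),\cdot)\in\cp_0$, observe that $E_P[|\tX_i^P|^2\mid\cf_{i-1}]$ is then the classical variance under this kernel, and apply Proposition~\ref{prop34} pointwise. The paper's proof is just a terser version of yours, omitting the measurability and integrability remarks you flag.
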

\begin{proof}
If $i=1$, then by Proposition \ref{prop34}, we have
$$\underline{V}(X_1)\leq E_P[(X_1-E_P[X_1])^2]\leq\overline{V}(X_1), \ \ \ \forall P\in\cp.$$

Fix $i\geq 2$. For each $P\in\cp$ with the representation in (\ref{e1}) and $\omega=(x_1,\cdots,x_{i-1},{x}_{i},\cdots)\in\br^\bn$,
\begin{align*}
E_P[|\tilde{X}^P_i|^2|\cf_{i-1}](\omega)=V_{\kappa_i(x_1,\cdots,{x}_{i-1},\ \cdot)}(X_i),\ P-\text{a.s.}.
\end{align*}

Since $\kappa_i(x_1,\cdots,x_{i-1},\cdot)\in\cp_0$,  thanks to Proposition \ref{prop34},  (\ref{conditional1}) holds.

\end{proof}

\section{Functional central limit theorem on the canonical space}

In this section, we establish the functional CLT on the canonical space, which converges to the $G$-expectation characterized by $G$-heat equation. Here we use another representation of $G$-expectation obtained in Denis et al. \cite{DHP11} and then developed by  Dolinsky et al. \cite{DNS}. We adopt the formulations in \cite{DNS}.
\subsection{Representation of $G$-expectation}\label{RG}
Let $\Omega_0=C([0,1])$ be the space of all continuous paths $\omega=(\omega_t)_{0\leq t\leq 1}$ with $\omega_0=0$, endowed with the norm $||\omega||_\infty=\sup_{0\leq t\leq 1}|\omega_t|$. The canonical process $B$ is defined by $B_t(\omega)=\omega_t$ and its quadratic variation process is denoted by $\langle B\rangle$. The canonical filtration is $\cf^B_t=\sigma(B_s,0\leq s\leq t).$

For the fixed non-negative interval $\Theta$, we set
$$\cp_\Theta=\lt\{P: B\ \text{is}\ \text{martingale\ under\ } P \ \text{and}\ \frac{d\langle B\rangle_t}{dt}\in\Theta, P\times dt-\text{a.e.}  \rt\}$$

Let $P_W$ be the Wiener measure on $C([0,1])$ and we define
$$\mathcal{Q}_\Theta=\lt\{P_W\circ\lt(\int f(t,B)dB_t\rt)^{-1}: f\in\mathcal{A}_\Theta \rt\},$$
where $\mathcal{A}_\Theta$ is the collection of all adapted continuous functions on $[0,1]\times\Omega_0$ taking values in $\sqrt{\Theta}$.

It is proved in \cite{DNS} that the upper expectations of $\cp_\Theta$ and $\mathcal{Q}_\Theta$ coincide, i.e.,
$$\mathbb{E}^{\cp_\Theta}[\vp]=\sup_{P\in\cp_\Theta}E_P[\vp]=\sup_{P\in\mathcal{Q}_\Theta}E_P[\vp]=\mathbb{E}^{\mathcal{Q}_\Theta}[\vp], \ \ \ \forall \vp\in C_b(\Omega_0).$$

Indeed, such canonical process $B$ is called a $G$-Brownian motion, which has the independent and stationary increments similar to the classical Brownian motion. The corresponding upper expectation $\mathbb{E}^{\cp_\Theta}$ is called a $G$-expectation (see Peng \cite{P2019}). %More details about these issues can be found in Peng \cite{P2019}.

\subsection{Functional central limit theorem with mean-uncertainty}
In order to obtain the functional  CLT, we extend each discrete path $x\in\br^n$ to a continuous path $\hat{x}\in\Omega_0$ using the interpolation operator $\hat{\ }: \br^{n}\rightarrow C([0,1])$ defined as
$$x=(x_1, \cdots, x_n)\mapsto \hat{x}=(\hat{x}_t)_{0\leq t\leq 1},$$
where $\hat{x}_t:=([nt]+1-nt)x_{[nt]}+(nt-[nt])x_{[nt]+1}$ with $x_0=0$.

Equipped with above notations, we have the functional CLT with mean-uncertainty on the canonical space $(\br^\bn,\cb(\br^\bn))$.

\begin{theorem}\label{fclt}
For each $P\in\cp$, let $S_P^{(n)}=(S^P_i)_{1\leq i\leq n}$ with $S^P_i=\frac{1}{\sqrt{n}}\sum_{j=1}^i(X_j-E_P[X_j|\cf_{j-1}])$. Then for each continuous function $\vp:C([0,1])\rightarrow\br$ satisfying $|\vp(\omega)|\leq C(1+||\omega||_\infty)$ for some constant $C>0$, we have
\begin{equation}\label{clt0}
\lim_{n\rightarrow\infty}\sup_{P\in\cp}E_P\lt[\vp\lt(\hat{S_P^{(n)}}\rt)\rt]=\mathbb{E}^{\cp_\Theta}[\vp],
\end{equation}
where $\Theta=[\underline{V}(X_1),\overline{V}(X_1)]$.
\end{theorem}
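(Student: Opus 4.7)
The plan is to prove the two matching inequalities for $\limsup$ and $\liminf$ separately, exploiting the representation $\mathbb{E}^{\cp_\Theta}[\vp]=\mathbb{E}^{\mathcal{Q}_\Theta}[\vp]$ from Section~\ref{RG}. The key structural observation is that for every $P\in\cp$, the centered array $\tilde{X}^P_j := X_j - E_P[X_j|\cf_{j-1}]$ is a square-integrable martingale difference sequence under $P$, and by Proposition~\ref{lem1} its conditional second moment is bounded pointwise in $\Theta=[\underline{V}(X_1),\overline{V}(X_1)]$. Hence $S_P^{(n)}$ is a discrete $P$-martingale whose predictable quadratic variation has increments lying in $\Theta/n$, which matches exactly the admissible volatility range defining $\cp_\Theta$. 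The uniform integrability condition (\ref{c2}) transfers to the centered variables and will supply the Lindeberg-type controls at every step.

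For the upper bound $\limsup_n \sup_{P\in\cp} E_P[\vp(\hat{S_P^{(n)}})] \leq \mathbb{E}^{\cp_\Theta}[\vp]$, I would choose a near-optimal sequence $P_n\in\cp$ and consider the laws $\mu_n := P_n \circ (\hat{S_{P_n}^{(n)}})^{-1}$ on $\Omega_0$. Uniform moment bounds from (\ref{c2}) and Proposition~\ref{lem1}, combined with Aldous's tightness criterion applied to the martingale interpolation, yield tightness of $\{\mu_n\}$. For any subsequential weak limit $\mu$, the functional martingale CLT in the classical probability theory (e.g.\ Jacod--Shiryaev) identifies $\mu$ as the law of a continuous martingale $M$ with $M_0=0$ whose quadratic variation derivative $d\langle M\rangle_t/dt$ lies in $\Theta$ for $\mu\times dt$-a.e.\ $(t,\omega)$; thus $\mu\in\cp_\Theta$ and $\int\vp\, d\mu\leq\mathbb{E}^{\cp_\Theta}[\vp]$. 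The linear growth of $\vp$ together with uniform integrability then promotes weak convergence to convergence of the integrals along the subsequence, closing the upper bound.

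For the lower bound, by the $\cp_\Theta=\mathcal{Q}_\Theta$ representation it suffices to approximate $E_{P_f}[\vp]$ for every $f\in\mathcal{A}_\Theta$, where $P_f := P_W\circ(\int_0^\cdot f(t,B)dB_t)^{-1}$. At step $i$, given observations $(x_1,\ldots,x_{i-1})$, form the interpolated path $\hat{x}$, set $\sigma_i^2 := f(i/n,\hat{x})^2\in\Theta$, and invoke Proposition~\ref{lem35} to select $\kappa_i(x_1,\ldots,x_{i-1},\cdot)\in\cp_0$ whose classical variance equals $\sigma_i^2$. The resulting joint law $P_n$ belongs to $\cp$ by (\ref{e1}), and makes the conditional variance of $\tilde{X}^{P_n}_i$ agree with the piecewise-constant approximation of $f^2$. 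A law of large numbers for the quadratic variation plus the Kurtz--Protter stability theorem for stochastic integrals then yields $\hat{S_{P_n}^{(n)}} \Rightarrow \int_0^\cdot f(t,B)dB_t$ in law, whence $\liminf_n \sup_{P\in\cp} E_P[\vp(\hat{S_P^{(n)}})] \geq E_{P_f}[\vp]$; taking the supremum over $f\in\mathcal{A}_\Theta$ finishes the proof.

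The main obstacle is the lower-bound construction: one must perform the variance-matching kernel selection from Proposition~\ref{lem35} as a \emph{Borel-measurable} function of $(x_1,\ldots,x_{i-1})$ so that the resulting joint law genuinely lies in $\cp$, and then verify the uniformly controlled variation (UCV) hypothesis needed to invoke stability of stochastic integrals along a sequence of discrete martingale approximations. A careful measurable selection argument applied to the convex combination $P = c\underline{P} + (1-c)\overline{P}$ appearing in the proof of Proposition~\ref{lem35}, together with the uniform bound on conditional second moments from Proposition~\ref{lem1} and the tail control (\ref{c2}), should resolve both points.
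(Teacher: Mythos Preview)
Your upper-bound strategy is essentially the paper's: pick near-optimisers $P_n$, show tightness of the interpolated laws, and identify any cluster point as a continuous martingale with quadratic-variation density in $\Theta$, hence a member of $\cp_\Theta$. The paper carries this out by first truncating $X_i$ at level $\lambda$ (which makes tightness and the quadratic-variation control clean) and then removing the truncation via (\ref{c2}) and a Rosenthal-type maximal inequality; your sketch omits the truncation but the argument is otherwise aligned.

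The lower bound, however, has a genuine gap. You propose to set $\sigma_i^2=f(i/n,\hat{x})^2$ with $\hat{x}$ built from the observed data $(x_1,\ldots,x_{i-1})$, and then appeal to Kurtz--Protter to obtain $\hat{S_{P_n}^{(n)}}\Rightarrow\int_0^\cdot f(t,B)\,dB_t$. But in the representation $\mathcal{Q}_\Theta$, the integrand $f(t,B)$ is evaluated at the \emph{driving} standard Brownian motion $B$, not at the stochastic integral itself. With your choice, the centered partial sum satisfies a discrete analogue of the SDE $dM_t=f(t,M)\,dW_t$ rather than the discrete stochastic integral $\sum f\bigl(\tfrac{j-1}{n},\hat{W^{(n)}}\bigr)\Delta W^{(n)}$; these have different limits whenever $f$ depends on the path, so Kurtz--Protter would not deliver the target law $P_W\circ(\int f(t,B)\,dB_t)^{-1}$.

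The paper fixes this with a normalisation trick that your sketch is missing. One defines
\[
Y_i=\frac{X_i-E_{P_*}[X_i|\cf_{i-1}]}{\sqrt{E_{P_*}[|X_i-E_{P_*}[X_i|\cf_{i-1}]|^2|\cf_{i-1}]}},\qquad
W^{(n)}(t)=\frac{1}{\sqrt{n}}\sum_{j\le[nt]}Y_j,
\]
so that $E_{P_*}[Y_i|\cf_{i-1}]=0$ and $E_{P_*}[Y_i^2|\cf_{i-1}]=1$; by the martingale CLT, $\hat{W^{(n)}}\Rightarrow W$. The kernel $\kappa_i$ is then chosen via Proposition~\ref{lem35} so that the conditional variance equals $f\bigl(\tfrac{i-1}{n},\hat{W^{(n)}}\bigr)^2$, i.e.\ $f$ is evaluated at the \emph{normalised} path. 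This yields the exact identity $S^{P_*}_k=\sum_{j\le k} f\bigl(\tfrac{j-1}{n},\hat{W^{(n)}}\bigr)\bigl(W^{(n)}(\tfrac{j+1}{n})-W^{(n)}(\tfrac{j}{n})\bigr)$, to which stability of stochastic integrals applies directly. Your measurable-selection concern is legitimate and is indeed handled by taking the convex-combination coefficient $c$ in Proposition~\ref{lem35} as a (measurable) function of the prescribed $\sigma^2$, but that alone does not close the gap above: you must also redirect the argument of $f$ from $\hat{x}$ to the normalised process $\hat{W^{(n)}}$.
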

\begin{proof}
We firstly prove that (\ref{clt0}) holds for $\vp\in C_{b.Lip}(\Omega_0)$, where $C_{b.Lip}(\Omega_0)$ consists of all bounded and Lipschitz continuous functions on $\Omega_0$.

We consider the first inequality
\begin{equation}
\limsup_{n\rightarrow\infty}\sup_{P\in\cp}E_P\lt[\vp\lt(\hat{S_{P}^{(n)}}\rt)\rt]\leq\mathbb{E}^{\cp_{\Theta}}[\vp].\label{iq1}
\end{equation}
Let $\lambda$ be an arbitrary positive constant. Denote $X_n^\lambda:=(X_n\wedge \lambda) \vee (-\lambda)$ and  $S_{P,\lambda}^{(n)}=(S^{P,\lambda}_i)_{1\leq i\leq n}$ with $S^{P,\lambda}_i=\frac{1}{\sqrt{n}}\sum_{j=1}^i(X_j^\lambda-E_P[X_j^\lambda |\cf_{j-1}])$, $n \in \bn$.
For fixed $\vp\in C_{b.Lip}(\Omega_0)$ and $\ve>0$, there exists $P^{(n)}\in\cp$ such that, for each $n\in\bn$,
\begin{equation}\label{eq22}
\sup_{P\in\cp}E_P\lt[\vp\lt(\hat{S_{P,\lambda}^{(n)}}\rt)\rt]\leq E_{P^{(n)}}\lt[\vp\lt(\hat{S_{P^{(n)},\lambda}^{(n)}}\rt)\rt]+\ve.
\end{equation}

Let $Q^{(n)}$ be the law of $\hat{S_{P^{(n)},\lambda}^{(n)}}$ introduced by probability measure $P^{(n)}\in\cp$, i.e.,
$$E_{Q^{(n)}}[\vp(B)]=E_{P^{(n)}}\lt[\vp\lt(\hat{S_{P^{(n)},\lambda}^{(n)}}\rt)\rt], \ \ \forall \vp\in C_b(\Omega_0).$$
The sequence $\{Q^{(n)}\}$ is tight on $C[0,1]$. Let $Q$ be a cluster point of $Q^{(n)}$, then the canonical process $B$ is a $Q$-martingale.

Let
$$M^{(n)}_\lambda(t)=\frac{1}{\sqrt{n}}\sum_{i=0}^{[nt]}\tilde{X}_i^{P^{(n)},\lambda},$$
where
$$\tilde{X}_i^{P^{(n)},\lambda}=X_i^{\lambda}-E_{P^{(n)}}[X_i^{\lambda}|\cf_{i-1}],\ \ X_0=0.$$
The law of $M^{(n)}_\lambda$ on the space $D[0,1]$ of c\`{a}dl\`{a}g paths is denoted as $\tilde{Q}^{(n)}$. We note that
\begin{align*}
&\lt|E_{P_\lambda^{(n)}}\lt[\vp\lt(\hat{S_{P^{(n)},\lambda}^{(n)}}\rt)\rt]-E_{P^{(n)}}[\vp(M^{(n)}_\lambda)]\rt|\\
\leq&\frac{L_\vp}{\sqrt{n}}E_{P^{(n)}}\left[\max_{1\leq i \leq n}|\tX^{P^{(n)},\lambda}_i|\right]\\
\leq&\frac{2L_\vp\lambda}{\sqrt{n}}\rightarrow 0,
\end{align*}
where $L_\vp$ is the Lipschitz constant of $\vp$. Thus $Q$ is also a cluster point of $\tilde{Q}^{(n)}$.

We have
$$\langle M^{(n)}_\lambda\rangle_t=\frac{1}{n}\sum_{i=0}^{[nt]}|\tilde{X}^{P^{(n)},\lambda}_i|^2,$$
and define $d_{\Theta_\lambda}(x)=\inf_{y\in\Theta_\lambda}|y-x|$, where $\Theta_\lambda:=[\underline{V}(X_1^\lambda),\overline{V}(X_1^\lambda)]$.

Since $\underline{V}(X_1^\lambda)\leq E_{\tilde{Q}^{(n)}}\left[|\tilde{X}^{P^{(n)},\lambda}_i|^2|\mathcal{F}_{i-1}\right]\leq\overline{V}(X_1^\lambda)$ by Proposition \ref{lem1}, the similar argument of the law of large numbers  (see Appendix) shows that, for any $0\leq s<t\leq 1$,

$$E_{\tilde{Q}^{(n)}}\lt[d_{\Theta_\lambda}\lt(\frac{\langle M^{(n)}_\lambda\rangle_t-\langle M^{(n)}_\lambda\rangle_s}{t-s}\rt)\rt]\rightarrow 0,$$
which implies that
$$E_{Q}\lt[d_{\Theta_\lambda}\lt(\frac{\langle B\rangle_t-\langle B\rangle_s}{t-s}\rt)\rt]=0.$$
Thus $Q\in \cp_{\Theta_\lambda}$. Hence, by the arbitrariness of $\varepsilon$ in (\ref{eq22}),
\begin{equation}
\limsup_{n\rightarrow\infty}\sup_{P\in\cp}E_P\lt[\vp\lt(\hat{S_{P,\lambda}^{(n)}}\rt)\rt]\leq\mathbb{E}^{\cp_{\Theta_\lambda}}[\vp].
\end{equation}

Let $\lambda\rightarrow \infty$. By (\ref{c2}) we obtain $\Theta_\lambda \rightarrow \Theta$. Thanks to the Rosnethal's inequality for martingales, we also have for each $P\in\cp$,
\begin{equation}
\begin{aligned}
&\lt|E_{P}\lt[\vp\lt(\hat{S_{P}^{(n)}}\rt)\rt]-E_{P}\lt[\vp\lt(\hat{S_{P,\lambda}^{(n)}}\rt)\rt]\rt|\\
\leq&\frac{L_\vp}{\sqrt{n}}E_{P}\lt[\max_{1\leq i \leq n}\lt|\sum_{j=1}^i\lt(\lt(X_j-\lambda\rt)^+-E_{P}\lt[\lt(X_j-\lambda\rt)^+|\cf_{j-1}\rt]\rt)\rt|\rt]\\
\leq& C \lt\{ \e^{\cp_0} \lt[ \lt( | X_1 |^2 - \lambda ^2 \rt) ^{+} \rt]\rt\}^{1/2} \rightarrow 0,
\end{aligned}
\end{equation}
where $C$ is some positive constant. Therefore (\ref{iq1}) holds.

Now we consider the second inequality
\begin{equation}\label{iq2}
\liminf_{n\rightarrow\infty}\sup_{P\in\cp}E_P\lt[\vp\lt(\hat{S_P^{(n)}}\rt)\rt]\geq\mathbb{E}^{\cp_\Theta}[\vp].
\end{equation}
Let $\vp\in C_{b.Lip}(\Omega_0)$ be fixed. For each $\ve>0$, there exists $\bar{f}\in \mathcal{A}_{\Theta}$, such that
$$E_{P_W}\lt[\vp\lt(\int\bar{f}(t,W)dW_t\rt)\rt]\geq \mathbb{E}^{\mathcal{Q}_\Theta}[\vp]-\ve.$$
Without loss of generality, we further assume that $\bar{f}>0$.

Since $\bar{f}(0,\cdot)\in\sqrt{\Theta}$ is a constant, by Proposition \ref{lem35} there exists $P_1 \in \cp_0$ such that
\begin{equation}\label{ce}
{E_{P_1}[|X_1-E_{P_1}[X_1]|^2]}=\bar{f}\lt(0, \cdot\rt)^2.
\end{equation}
Let
$$Y_1=\frac{X_1-E_{P_1}[X_1]}{\sqrt{E_{P_1}[|X_1-E_{P_1}[X_1]|^2]}}.$$

Now we follow a recursive procedure to construct a probability measure $P_*\in \cp$. We define, for $j\geq 2$,
$$P_j(A)=\int_\br P_1(dx_1)\int_\br\kappa_2(x_1,dx_2) \cdots\int_\br I_A\kappa_j(x_1,\cdots,x_{j-1},dx_j),\ \ \forall A \in \mathcal{B}(\br^j)$$
in which $\kappa_j(x_1,\cdots,x_{j-1},\cdot)$ is chosen to be the probability measure $P$ in (\ref{conditional2}) with
$$
\begin{aligned}
&\sigma^2=\bar{f}\lt(\frac{j-1}{n}, \hat{W_{j-1}^{(n)}}\rt)^2\big|_{(X_1,\cdots,X_{j-1})=(x_1,\cdots, x_{j-1})},\\
&W^{(n)}_j(t):=\frac{1}{\sqrt{n}}\sum_{i=0}^{[nt]\wedge j}Y_i,
\end{aligned}
$$
where $Y_0=0$, and for $i\geq 1$,
$$Y_i=\frac{X_i-E_{P_i}[X_i|\cf_{i-1}]}{\sqrt{E_{P_i}[|X_i-E_{P_i}[X_i|\cf_{i-1}]|^2|\cf_{i-1}]}}.$$
Then $P_*$ is formulated by
$$P_*(A\times\br^{\bn-n}):=P_n(A),\ \ \forall A\in\br^n, n\in\bn.$$

Denote
$$W^{(n)}(t)=\frac{1}{\sqrt{n}}\sum_{j=0}^{[nt]}Y_j.$$
We note that $\bar{f}(\frac{j}{n},\hat{W^{(n)}})=\bar{f}(\frac{j}{n},\hat{W_j^{(n)}})$. It is easy to verify that for $1\leq i\leq n$,
\begin{equation*}
{E_{P_*}[|X_i-E_{P_*}[X_i|\cf_{i-1}]|^2|\mathcal{F}_{i-1}]}=\bar{f}\lt((i-1)/n, \hat{W^{(n)}}\rt)^2\in \Theta.
\end{equation*}

Clearly we have $E_{P_*}[Y_j|\cf_{j-1}]=0$ and $E_{P_*}[Y_j^2|\cf_{j-1}]=1$, $j\geq 1$.
By the martingale central limit theorem in Brown \cite{brown}, on the space of c\`{a}dl\`{a}g paths $D([0,1],\br^2)$ equipped with the Skorohod topology,
$$\lt(W^{(n)},\hat{W^{(n)}}\rt)\Rightarrow(W,W),$$
where $W$ is the standard Brownian motion.

We note that
\begin{align*}
S^{P_*}_i&=\frac{1}{\sqrt{n}}\sum_{j=1}^i(X_j-E_{P_*}[X_j|\cf_{j-1}])\\
&=\sum_{j=1}^i\bar{f}\lt(\frac{j-1}{n}, \hat{W^{(n)}}\rt)\lt(W^{(n)}\lt(\frac{j+1}{n}\rt)-W^{(n)}\lt(\frac{j}{n}\rt)\rt).
\end{align*}
Then it follows from Dolinsky et al. \cite{DNS} and the stability of stochastic integrals (see Kurtz and Protter \cite{KP}) that, on $D(0,1)$,
$$\hat{S_{P_*}^{(n)}}\Rightarrow \int\bar{f}(t,W)dW_t.$$
 Finally, we obtain
\begin{align*}
\liminf_{n\rightarrow\infty}\sup_{P\in\cp}E_P\lt[\vp\lt(\hat{S_P^{(n)}}\rt)\rt]&\geq\liminf_{n\rightarrow\infty}E_{P_*}\lt[\vp\lt(\hat{S_{P_*}^{(n)}}\rt)\rt]\\
&= E_{P_W}\lt[\vp\lt(\int_0^1\bar{f}(t,W)dW_t\rt)\rt]\geq \mathbb{E}^{\mathcal{Q}_\Theta}[\vp]-\ve.
\end{align*}
Thus (\ref{clt0}) holds for $\vp\in C_{b.Lip}(\Omega_0)$.

For fixed $n\in\bn$ and $\lambda>0$, by Doob's martingale inequality, we have
$$E_{P}\lt[\sup_{0\leq t\leq 1}|M^{(n)}_\lambda(t)|^2\rt]\leq 4\overline{V}(X_1^\lambda), \ \ \ \forall P\in\cp,$$
which implies that
$$\sup_{P\in\cp}E_P\lt[\sup_{0\leq t\leq 1}\lt|\hat{S_P^{(n)}}(t)\rt|^2\rt]\leq 4\overline{V}(X_1^\lambda)+2\lambda+ C^{\prime} \e^{\cp_0} \lt[ \lt( | X_1 |^2 - \lambda ^2 \rt) ^{+} \rt]<\infty.$$
By the similar arguments as Lemma 2.4.12 in Peng \cite{P2019}, (\ref{clt0}) also holds for continuous function $\vp$ with linear growth condition.
\end{proof}

%\begin{remark}
%Dolinsky et al. \cite{DNS} proved a functional CLT with an implicit condition that the volatility uncertainty of $\sum_{i=1}^n X_i /\sqrt{n}$ is scaled as $o(1)$, while our result remains valid for the $O(1)$ case with quadratic uniformly integrability.
%\end{remark}
%\begin{theorem}
%\label{fclt0}
%For each continuous function $\vp:C([0,1])\rightarrow\br$ satisfying $|\vp(\omega)|\leq C(1+||\omega||_\infty)$ for some constant $C>0$, we have
%\begin{equation}\label{conv}
%\lim_{n\rightarrow\infty}\mathbb{E}^{\cp}[\vp(\hat{S^{(n)}})]=\mathbb{E}^{\cp_\Theta}[\vp],
%\end{equation}
%where $\Theta=[\ls^2,\os^2]$ with $\os^2=\mathbb{E}^{\cp_0}[|X_0|^2]$ and $\ls^2=-\mathbb{E}^{\cp_0}[-|X_0|^2]$.
%\end{theorem}

\section{Functional central limit theorem on sublinear expectation space}

In this section, we extend  functional CLT to the general sublinear expectation space $(\Omega,\mathcal{H},\be)$ introduced by Peng \cite{P2019}.
\subsection{Basic notions of sublinear expectation theory}
Let $\Omega$ be a given set and let $\mathcal{H}$ be a linear space of real
functions defined on $\Omega$ such that if
$X_{1},\ldots,X_{n}\in \mathcal{H}$, then $\varphi(X_{1},\cdots,X_{n}%
)\in \mathcal{H}$ for each $\varphi \in C_{Lip}(\mathbb{R}^{n})$, where
$C_{Lip}(\mathbb{R}^{n})$ denotes the space of all Lipschitz
functions on $\mathbb{R}^{n}$. $\mathcal{H}$ will serve as the space of
random variables.
$X=(X_{1},\ldots,X_{n})$, $X_{i}\in \mathcal{H}$, is called
an $n$-dimensional random vector.

\begin{definition}\label{sub}
A sublinear expectation on $\mathcal{H}$ is a functional $\hat{\mathbb{{E}}}:\mathcal{H}\rightarrow \mathbb{R}$ satisfying the following
properties: for all $X,Y\in \mathcal{H}$,
\begin{description}
\item[(a)] Monotonicity: $\mathbb{\hat{E}}[X]\geq \mathbb{\hat{E}}[Y]$ if
$X\geq Y$.

\item[(b)] Constant preserving: $\mathbb{\hat{E}}[c]=c$ for $c\in \mathbb{R}$.

\item[(c)] Sub-additivity: $\mathbb{\hat{E}}[X+Y]\leq \mathbb{\hat{E}%
}[X]+\mathbb{\hat{E}}[Y]$.

\item[(d)] Positive homogeneity: $\mathbb{\hat{E}}[\lambda X]=\lambda
\mathbb{\hat{E}}[X]$ for $\lambda \geq0$.
\end{description}

The triple $(\Omega,\mathcal{H},\mathbb{\hat{E}})$ is called a sublinear
expectation space.
\end{definition}

%Denote by $\mathcal{\hat{H}}$ the completion of $\mathcal{H}$ under the norm
%$||X||:=\mathbb{\hat{E}}[|X|]$. Noting that $|\mathbb{\hat{E}}[X]-\mathbb{\hat
%{E}}[Y]|\leq \mathbb{\hat{E}}[|X-Y|]$, $\mathbb{\hat{E}}[\cdot]$ can be
%continuously extended to $\mathcal{\hat{H}}$. One can check that
%$(\Omega,\mathcal{\hat{H}},\mathbb{\hat{E}})$ is still a sublinear expectation
%space, which is called a complete sublinear expectation space. In the
%following, we always suppose that $(\Omega,\mathcal{H},\mathbb{\hat{E}})$ is
%complete.

Let $X=(X_1,\cdots,X_n)$ be a given $n$-dimensional random vector on a sublinear expectation space $(\Omega,\mathcal{H},\mathbb{\hat{E}})$. We define a functional on $C_{b.Lip}(\br^n)$ by
$$\mathbb{\hat{F}}_X[\vp]:=\mathbb{\hat{E}}[\vp(X)], \ \ \forall \vp\in C_{b.Lip}(\br^n).$$
The triple $(\br^n, C_{b.Lip}(\br^n),\mathbb{\hat{F}}_X[\cdot])$ forms a sublinear expectation space, and $\mathbb{\hat{F}}_X$ is called the sublinear distribution of $X$.

\begin{definition}\label{id}
Let $X$ and $Y$ be two random variables on $(\Omega,\mathcal{H},\mathbb{\hat
{E}})$. $X$ and $Y$ are called identically distributed, denoted by
$X\overset{d}{=}Y$, if for each $\varphi \in C_{b.Lip}(\mathbb{R})$,
\[
\mathbb{\hat{E}}[\varphi(X)]=\mathbb{\hat{E}}[\varphi(Y)].
\]

\end{definition}

%We slightly modify the definition of independence in Peng \cite{P2019}. It is obvious that Peng's independence guarantees the independence below.
\begin{definition}
\label{new-de1}Let $\left \{  X_{n}\right \}  _{n\in\bn}$ be a sequence of
random variables on $(\Omega,\mathcal{H},\mathbb{\hat{E}})$. $X_{n}$ is said
to be independent of $\left(  X_{1},\ldots,X_{n-1}\right)  $ under
$\mathbb{\hat{E}}$, if for each $\vp\in C_{b.Lip}(\br^{n})$,
$$
\mathbb{\hat{E}}\left[\varphi\left(X_{1}, \cdots, X_{n}\right)\right]=\mathbb{\hat{E}}\left[\left.\mathbb{\hat{E}}\left[\varphi\left(x_{1}, \cdots, x_{n-1},X_n\right)\right]\right|_{\left(x_{1}, \cdots, x_{n-1}\right)=\left(X_{1}, \cdots, X_{n-1}\right)}\right].
$$
The sequence of random variables $\left \{  X_{n}\right \}  _{n\in\bn}$ is
said to be independent, if $X_{n+1}$ is independent of $\left(  X_{1}%
,\ldots,X_{n}\right)  $ for each $n\geq1$.
\end{definition}

\begin{remark}
Compared with the classical definition of independence, the one we provide above is more general, since the regularity of $\be$ (see Definition \ref{reg}) is not required. Example \ref{ex1} illustrates that a sequence of random variables might be able to be viewed i.i.d. from the perspective of sublinear expectation, even when that is impossible under classical probability measure.
%This definition of independence is more general than the classical one, since we do not require the regularity of $\be$ (see Definition \ref{reg}). Example \ref{ex1} illustrates that we can not construct i.i.d. sequence on the classical probability space, but we can do it on the sublinear expectation space.
\end{remark}

\begin{definition}
A  random variable $\xi$ on a sublinear
expectation space $(\Omega,\mathcal{H},\be)$ with $\lim_{\lambda\to\infty}\be[(|\xi|^2-\lambda)^+]=0$ is called
{$G$-normally distributed}, if
\begin{equation}\label{sp}
a\xi+b\bar{\xi}\overset{d}{=}\sqrt{a^{2}+b^{2}}\xi,\ \ \ \ {\forall }a,b>0,\
\end{equation}
where $\bar{\xi}$ is independent and identically distributed with $\xi$.
\end{definition}

The $G$-normal distributed random variable $\xi$ is determined by two parameters $\os^2=\be[\xi^2]$ and $\ls^2=-\be[-\xi^2]$, denoted by $\xi\sim\mathcal{N}(0,[\ls^2,\os^2]).$

\begin{proposition}
Let $\xi$ be $G$-normally distributed on the sublinear expectation space $(\Omega,\mathcal{H},\be)$ with $\overline{\sigma}^2=\be[\xi^2]$ and $\underline{\sigma}^2=-\be[-\xi^2]$. For each $\vp\in C_{Lip}(\mathbb{R})$, we define
$$u(t,x)=\be[\vp(x+\sqrt{1-t}\xi)], (t,x)\in[0,1]\times\br.$$
Then $u(t,x)$ is the unique viscosity solution of following parabolic PDE:
\begin{equation}\label{g-heat}
\partial_tu+G(\partial_{xx}^2u)=0,
\end{equation}
with Cauchy condition $u|_{t=1}=\vp$, where $G(\alpha)=\frac{1}{2}(\os^2\alpha^+-\ls^2\alpha^-), \ \ \alpha\in\br.$
\end{proposition}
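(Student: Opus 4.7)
The plan is to derive the PDE from a semigroup (dynamic programming) identity satisfied by $u$, which in turn is a direct consequence of the self-similarity property $a\xi + b\bar{\xi} \overset{d}{=} \sqrt{a^2+b^2}\,\xi$ of the $G$-normal distribution together with the independence of $\bar{\xi}$ from $\xi$. Concretely, for $0 \leq t \leq s \leq 1$, I would let $\bar{\xi}$ be an independent copy of $\xi$ and compute
\begin{equation*}
\be\bigl[u(s, x + \sqrt{s-t}\,\xi)\bigr] = \be\bigl[\be[\vp(y + \sqrt{1-s}\,\bar{\xi})]\big|_{y = x + \sqrt{s-t}\,\xi}\bigr] = \be\bigl[\vp(x + \sqrt{s-t}\,\xi + \sqrt{1-s}\,\bar{\xi})\bigr],
\end{equation*}
which by identical distribution equals $\be[\vp(x + \sqrt{1-t}\,\xi)] = u(t,x)$. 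This gives the semigroup identity $u(t,x) = \be[u(s, x + \sqrt{s-t}\,\xi)]$.

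Next I would establish the regularity of $u$ needed to apply the viscosity framework: Lipschitz continuity in $x$ (inherited from $\vp$ via monotonicity and sub-additivity of $\be$) and $\tfrac{1}{2}$-Hölder continuity in $t$ (using the semigroup identity together with the bound $\be[|\xi|] < \infty$, ensured by the uniform integrability hypothesis). To verify the viscosity subsolution property at a point $(t_0, x_0)$, I would take any smooth test function $\psi$ with $\psi \geq u$ and $\psi(t_0,x_0) = u(t_0,x_0)$, apply the semigroup identity with $s = t_0 + h$, and use a second-order Taylor expansion:
\begin{equation*}
\psi(t_0 + h, x_0 + \sqrt{h}\,\xi) = \psi(t_0, x_0) + h\,\partial_t\psi + \sqrt{h}\,\xi\,\partial_x\psi + \tfrac{h}{2}\xi^2\,\partial_{xx}^2\psi + R_h(\xi),
\end{equation*}
where derivatives are evaluated at $(t_0, x_0)$. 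Since $\be[\sqrt{h}\,\xi\,\partial_x\psi] = \be[-\sqrt{h}\,\xi\,\partial_x\psi] = 0$ and $\be[\tfrac{h}{2}\xi^2 c] = h\,G(c)$ for any $c\in\br$ (by the positive homogeneity of $\be$ and the definitions of $\os^2, \ls^2$), taking $\be$ on both sides and using $\psi(t_0,x_0) \leq \be[\psi(t_0+h, x_0 + \sqrt{h}\,\xi)]$ yields, after dividing by $h$ and letting $h \to 0^+$, the inequality $\partial_t\psi(t_0,x_0) + G(\partial_{xx}^2\psi(t_0,x_0)) \geq 0$. The supersolution inequality is symmetric. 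Uniqueness then follows from the standard comparison principle for viscosity solutions of the $G$-heat equation, whose nonlinearity $G$ is Lipschitz and degenerate elliptic.

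The main obstacle is controlling the remainder $\be[R_h(\xi)]/h \to 0$ uniformly: $R_h(\xi)$ involves $\xi^2$ times a modulus of continuity of $\partial_{xx}^2\psi$, and to push the expectation of this error to zero one must exploit the uniform integrability condition $\lim_{\lambda \to \infty}\be[(|\xi|^2 - \lambda)^+] = 0$ (splitting the expectation at a truncation level $\lambda$, bounding the truncated part by the modulus, and sending $\lambda$ to infinity). A secondary technical point is justifying that $\be[\sqrt{h}\xi\,\partial_x\psi + \tfrac{h}{2}\xi^2\,\partial_{xx}^2\psi]$ equals $\be[\tfrac{h}{2}\xi^2\,\partial_{xx}^2\psi]$ despite the lack of linearity of $\be$; this follows from the two-sided bounds $\be[\pm \sqrt{h}\xi\,\partial_x\psi] = 0$ combined with sub-additivity sandwich arguments.
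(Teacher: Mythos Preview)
The paper does not prove this proposition; it is stated as a known fact from Peng's theory with an implicit reference to \cite{P2019}. Your proposal is correct and is precisely the standard argument given in Peng's monograph: derive the dynamic programming principle $u(t,x)=\be[u(s,x+\sqrt{s-t}\,\xi)]$ from the scaling relation and independence, establish joint continuity of $u$, and then verify the viscosity sub- and supersolution properties via Taylor expansion of a smooth test function, controlling the remainder through the uniform integrability of $|\xi|^2$. The handling of the cross term $\sqrt{h}\,\xi\,\partial_x\psi$ via the two-sided mean-zero sandwich and the remainder via the truncation argument are exactly the points where the sublinear setting requires care, and you have identified both correctly.
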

We usually denote $\be[\vp(\xi)]$ by $\eg[\vp(\xi)]$ and call it $G$-expectation. Correspondingly, the ``dynamic'' $G$-normal distribution, called $G$-Brownian motion, is well defined. See more details about these notions in Peng \cite{P2019}. Indeed, characterizations of $G$-expectation and  $G$-Brownian motion here and those in Section \ref{RG} coincide in canonical space.
%The following proposition gives the links between the i.i.d. sequence on sublinear expectation space and that on canonical space. See Guo et al. \cite{gll3} for its detailed proof.
%\begin{proposition}\label{pp1}
%Let $\{X_i\}_{i\in\bn}$ be an i.i.d. sequence on sublinear expectation space $(\Omega,\mathcal{H},\be)$. Then there exists a sequence of i.i.d. canonical random variables  $\{Y_i\}_{i\in\bn}$ on canonical space $(\br^\bn,\cb(\br^\bn))$ associated with the set of probability measures $\cp$ such that for each $n\in\bn$  and $\vp\in C_{b.Lip}(\br^n)$,
%$$\be[\vp(X_1,\cdots,X_n)]=\mathbb{E}^{\cp}[\vp(Y_1,\cdots,Y_n)].$$
%\end{proposition}

For random variable $X$ on sublinear expectation $(\Omega,\mathcal{H},\be)$, the corresponding upper and lower variances of $X$ can be defined similarly as in Definition \ref{uvc}, specifically,
$$\overline{V}(X)=\min_{\lu\leq\mu\leq\ou}\be[(X-\mu)^2] \ \ \text{and} \ \  \underline{V}(X)=\min_{\lu\leq\mu\leq\ou}-\be[-(X-\mu)^2].$$

\subsection{Functional central limit theorem under regular sublinear expectation}
We firstly assume that sublinear expectation $\be$ is regular and present the corresponding functional CLT.

\begin{definition}\label{reg}
The sublinear expectation $\be$ is said to be regular if for each sequence $\{X_i\}_{i\in\bn}\subset\mathcal{H}$ with $X_i\downarrow 0$, we have $\be[X_i]\downarrow 0$.
\end{definition}

Assume $\be$ to be regular. We consider a family of probability measures $\cp_*$ defined on the measurable space $(\Omega,\sigma(\mathcal{H}))$:
\begin{equation}\label{eqp}
\cp_*=\{P \ \text{is\ probability\ measure\ on\ }(\Omega,\sigma(\mathcal{H})): E_P[X]\leq \be[X], \ \ \forall X\in\mathcal{H}\},
\end{equation}
where $\sigma(\mathcal{H})$ is the smallest $\sigma$-algebra generated by $\mathcal{H}$.

By the Robust Daniell-Stone theorem in Peng \cite{P2019}, $\cp_*$ is non-empty. Define
$$\be[X]=\sup_{P\in\cp_*}E_P[X], \ \ \forall X\in\mathcal{H}.$$
Then we enlarge the domain of $\be$ from $\mathcal{H}$ to $\mathcal{L}(\sigma(\mathcal{H}))$ as follows:
$$\be[X]=\sup_{P\in\cp_*}E_P[X], \ \forall X\in\mathcal{L}(\sigma(\mathcal{H})),$$
where $\mathcal{L}(\sigma(\mathcal{H}))$ is the space of all $\sigma(\mathcal{H})$-measurable functions.

Proposition \ref{lem1} can be naturally generalized on the regular sublinear expectation space $(\Omega,\mathcal{H},\be)$ (see Guo et al. \cite{gll}).
\begin{proposition}\label{lem2}
Let $\{X_i\}_{i\in\bn}$ be an i.i.d. sequence on sublinear expectation space $(\Omega,\mathcal{H},\be)$. We further assume that
\begin{equation*}
\lim_{\lambda\rightarrow\infty}\be[(|X_1|^{2}-\lambda)^+]=0.
\end{equation*}
For fixed $n\in\bn$, and for each $P\in\cp_*$, let $\tX^P_i=X_i-E_P[X_i|\cf_{i-1}], \ 1\leq i\leq n$. Then we have
\begin{equation*}
\underline{V}(X_1)\leq E_P[|\tX^P_i|^2|\cf_{i-1}]\leq\overline{V}(X_1),\ \   P-\text{a.s.},  \  \ 1\leq i\leq n.
\end{equation*}
\end{proposition}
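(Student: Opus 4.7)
The plan is to reduce Proposition~\ref{lem2} to the canonical-space version, Proposition~\ref{lem1}, by pushing each $P\in\cp_*$ forward to $(\br^\bn,\cb(\br^\bn))$ via the coordinate map $\Phi(\omega):=(X_i(\omega))_{i\in\bn}$ and verifying that its disintegration kernels lie in a fixed ``marginal'' set $\cp_0\subset\mathcal{P}(\br,\cb(\br))$ of the type used in (\ref{e1}). Regularity of $\be$ gives $\be[\,\cdot\,]=\sup_{P\in\cp_*}E_P[\,\cdot\,]$, so nothing is lost in this passage.

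First I would define
$$\cp_0:=\{\mu\in\mathcal{P}(\br,\cb(\br)):E_\mu[\vp]\leq\be[\vp(X_1)]\ \text{for all}\ \vp\in C_{b.Lip}(\br)\}.$$
This set is convex, and the moment hypothesis $\lim_{\lambda\to\infty}\be[(|X_1|^2-\lambda)^+]=0$ yields uniform $L^2$-tightness, hence weak compactness of $\cp_0$. A Daniell--Stone / Hahn--Banach argument also gives $\be[\vp(X_1)]=\sup_{\mu\in\cp_0}E_\mu[\vp]$. For fixed $P\in\cp_*$, set $P':=\Phi_*P$ on $(\br^\bn,\cb(\br^\bn))$; since $\br^\bn$ is Polish, $P'$ admits a regular disintegration into probability kernels $\mu_1,\kappa_2^P,\kappa_3^P,\ldots$ of the form appearing in (\ref{e1}).

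The main step is to show that $\mu_1\in\cp_0$ and $\kappa_i^P(x_1,\ldots,x_{i-1},\cdot)\in\cp_0$ for $P'$-a.e.\ $(x_1,\ldots,x_{i-1})$, so that $P'$ actually belongs to the canonical-space family $\cp$ built from $\cp_0$ via (\ref{e1}). The case of $\mu_1$ is immediate from $P\in\cp_*$. For $\kappa_i^P$, the trick is to test with $\phi:=\vp-\be[\vp(X_1)]$, which satisfies $\be[\phi(X_1)]=0$; by the sublinear i.i.d.\ property of $\{X_i\}$ under $\be$, for every nonnegative $\psi\in C_{b.Lip}(\br^{i-1})$,
$$\be\bigl[\psi(X_1,\ldots,X_{i-1})\phi(X_i)\bigr]=\be\bigl[\psi(X_1,\ldots,X_{i-1})\be[\phi(X_1)]\bigr]=0,$$
and since $P\in\cp_*$ dominates below,
$$E_P\bigl[\psi(X_1,\ldots,X_{i-1})(\vp(X_i)-\be[\vp(X_1)])\bigr]\leq 0.$$
Rewriting via the kernel decomposition of $P'$ forces $\int\vp\,d\kappa_i^P(x_1,\ldots,x_{i-1},\cdot)\leq\be[\vp(X_1)]$ $P'$-a.s.\ for each fixed $\vp$; removing the exceptional null sets over a countable dense $\{\vp_k\}\subset C_{b.Lip}(\br)$ gives $\kappa_i^P(x_1,\ldots,x_{i-1},\cdot)\in\cp_0$ a.s.

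With $P'\in\cp$ established, Proposition~\ref{lem1} on $(\br^\bn,\cb(\br^\bn),\cp)$ delivers $\underline V(X_1)\le E_{P'}[|\tX_i^{P'}|^2|\cf_{i-1}]\le\overline V(X_1)$ $P'$-a.s.; pulling back by $\Phi$, which identifies the canonical filtration on $\br^\bn$ with $\{\cf_i\}$ modulo $P$-null sets, and noting that $\underline V(X_1),\overline V(X_1)$ computed through $\be$ agree with the envelopes over $\cp_0$ by Proposition~\ref{prop34}, yields Proposition~\ref{lem2}. The main obstacle is the third paragraph: converting the \emph{sublinear} identity $\be[\psi\phi(X_i)]=0$, valid only for $\psi\geq 0$, into an \emph{almost sure} one-sided kernel inequality. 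This asymmetry is essential, because the conditional law of $X_i$ under a single $P\in\cp_*$ need not coincide with any fixed $Q\in\cp_0$; it is merely dominated by $\cp_0$ in expectation, and the test against nonnegative $\psi$ is precisely what captures this domination at the pointwise level.
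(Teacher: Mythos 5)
Your argument is correct, and it is essentially the paper's own route: the paper does not prove Proposition~\ref{lem2} at all (it defers to Guo et al.\ \cite{gll}), but its intended mechanism is exactly the transfer you carry out — push $P\in\cp_*$ forward to the canonical space, check that the disintegration kernels lie in the marginal set, and invoke Proposition~\ref{lem1}. What you prove in your third paragraph is precisely the easy inclusion $\cp_*\circ\boldsymbol{X}^{-1}\subseteq\cp$ of the paper's later representation theorem (Proposition~\ref{l-m}); note that this is all you need here, so you avoid the harder converse inclusion that the paper handles via weak compactness and \cite{LL}. Your kernel argument itself is sound: positivity of $\psi$ plus positive homogeneity gives $\be[\psi(X_1,\dots,X_{i-1})\phi(X_i)]=0$, domination by $P\in\cp_*$ gives the one-sided inequality, and the countable-class removal of null sets is routine. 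The only imprecision is the final identification of $\underline V(X_1),\overline V(X_1)$ (defined through $\be$) with the envelopes over your $\cp_0$: this does not follow from Proposition~\ref{prop34} alone, since a priori $\cp_0$ could be strictly larger than the set of marginals of $\cp_*$ and thus lower the infimum. It is rescued by the standing condition $\lim_{\lambda\to\infty}\be[(|X_1|^2-\lambda)^+]=0$: testing the defining inequality of $\cp_0$ against the bounded Lipschitz truncations $\max(-(x-\mu)^2,-\lambda)$ and $\min((x-\mu)^2,\lambda)$ and letting $\lambda\to\infty$ gives $E_{\mu'}[(x-\mu)^2]\in\bigl[-\be[-(X_1-\mu)^2],\,\be[(X_1-\mu)^2]\bigr]$ for every $\mu'\in\cp_0$, after which Proposition~\ref{prop34} applies; this is a routine verification, not a gap in the idea, but it should be said explicitly.
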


For each $P\in\cp_*$, let $P\circ \boldsymbol{X}^{-1}$ be the measure on the canonical space $(\br^\bn,\mathcal{B}(\br^\bn))$ introduced by a random vector  $\boldsymbol{X}=(X_1,\cdots,X_n,\cdots)$, where $\{X_i\}_{i\in\bn}$ is an i.i.d. sequence on the sublinear expectation space $(\Omega,\mathcal{H},\be)$. % with $\lim_{\lambda\rightarrow\infty}\be[(|X_1|-\lambda)^+]=0.$
Define
\begin{equation}\label{PX}
\cp_*\circ{\boldsymbol{X}^{-1}}:=\{P\circ \boldsymbol{X}^{-1}:\ \ \ P\in\cp_*\}.
\end{equation}

Let $\cp$ be the set defined via the probability kernels as in (\ref{e1}) with $\cp_i=\cp_1, \forall i\geq 2$, where $\cp_1$ represents the distribution of $X_1$ under $\be[\cdot]$ in the following sense:
$$\cp_1=\{\mu: E_\mu[\vp]\leq\be[\vp(X_1)], \ \ \forall \vp\in C_{Lip}(\br)\}.$$

We obtain a new representation theorem.

\begin{proposition}\label{l-m}
Assume that the i.i.d. sequence $\{X_i\}_{i\in\bn}$ on the sublinear expectation space $(\Omega,\mathcal{H},\be)$ satisfies
\begin{equation}\label{ui-4}
\lim_{\lambda\rightarrow\infty}\be[(|X_1|-\lambda)^+]=0.
\end{equation}
Denote $\mathcal{Q}_{\max}$ as the set of probability measures on the canonical space $(\br^\bn,\mathcal{B}(\br^\bn))$ defined by
$$\mathcal{Q}_{\max}=\{\mu: E_\mu[\vp]\leq\be[\vp(X_1,\cdots,X_n)], \forall n\in\bn, \forall \vp\in C_{Lip}(\br^n)\}.$$
Then we have
\begin{equation}\label{pq}
\cp_*\circ \boldsymbol{X}^{-1}=\cp=\mathcal{Q}_{\max}.
\end{equation}
\end{proposition}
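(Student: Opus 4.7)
The plan is to establish the cyclic chain $\cp_*\circ\boldsymbol{X}^{-1}\subseteq\mathcal{Q}_{\max}\subseteq\cp\subseteq\cp_*\circ\boldsymbol{X}^{-1}$. The first inclusion is immediate: for $P\in\cp_*$ and $\vp\in C_{Lip}(\br^n)$, $E_{P\circ\boldsymbol{X}^{-1}}[\vp]=E_P[\vp(X_1,\ldots,X_n)]\leq\be[\vp(X_1,\ldots,X_n)]$. The companion inclusion $\cp\subseteq\mathcal{Q}_{\max}$ follows by induction on $n$: given $\nu\in\cp$ with kernel decomposition $(\mu_1,\kappa_2,\ldots)$, one iteratively uses the kernel bound $\int\vp(y,x_n)\,\kappa_n(y,dx_n)\leq\be[\vp(y,X_1)]$ (since $\kappa_n(y,\cdot)\in\cp_1$) together with the independence identity $\be[\vp(X_1,\ldots,X_n)]=\be[\be[\vp(x_1,\ldots,x_{n-1},X_n)]|_{x=X}]$ to reduce dimension.

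The central step is $\mathcal{Q}_{\max}\subseteq\cp$. Given $\mu\in\mathcal{Q}_{\max}$, disintegrate on the Polish space $\br^\bn$ into measurable kernels $(\mu_1,\kappa_2,\ldots)$ of the form (\ref{e1}); the first marginal $\mu_1\in\cp_1$ is immediate. For each $n\geq 2$, fix $\vp\in C_{b.Lip}(\br)$ and set $c:=\be[\vp(X_1)]$. The key observation is that for any nonnegative $\psi\in C_{b.Lip}(\br^{n-1})$, the test function
\[
\Phi(x_1,\ldots,x_n):=\psi(x_1,\ldots,x_{n-1})(\vp(x_n)-c)
\]
lies in $C_{b.Lip}(\br^n)$ and satisfies $\be[\Phi(X_1,\ldots,X_n)]=0$: by independence of $X_n$ from $(X_1,\ldots,X_{n-1})$ under $\be$, the inner expectation collapses to $\psi(x)\be[\vp(X_n)-c]=\psi(x)(\be[\vp(X_n)]-c)=0$, using identical distribution $\be[\vp(X_n)]=c$. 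Applying the defining property of $\mathcal{Q}_{\max}$ to $\Phi$ therefore yields
\[
\int\psi(y)\left(\int\vp(x_n)\,\kappa_n(y,dx_n)-c\right)\mu_{n-1}(dy)=E_\mu[\Phi]\leq 0
\]
for every nonnegative $\psi\in C_{b.Lip}(\br^{n-1})$. The arbitrariness of $\psi$ forces $\int\vp\,\kappa_n(y,\cdot)\leq c$ for $\mu_{n-1}$-a.e.\ $y$, and ranging $\vp$ over a countable dense subset of $C_{b.Lip}(\br)$ together with a modification of $\kappa_n$ on the resulting $\mu_{n-1}$-null set by any fixed element of $\cp_1$ produces a kernel valued in $\cp_1$ everywhere without changing $\mu$. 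Hence $\mu\in\cp$.

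For the last inclusion $\cp\subseteq\cp_*\circ\boldsymbol{X}^{-1}$ the regularity of $\be$ enters decisively. Given $\nu\in\cp\subseteq\mathcal{Q}_{\max}$, the map $L(\vp(X_1,\ldots,X_n)):=\int\vp\,d\nu$ defines a linear functional on the subspace of $\mathcal{H}$ generated by the $C_{b.Lip}$-cylinders of $\boldsymbol{X}$, well-defined and dominated by $\be$. Extend $L$ by Hahn--Banach to a linear functional $\tilde L\leq\be$ on all of $\mathcal{H}$, and apply the Robust Daniell--Stone representation theorem (which requires regularity) to realize $\tilde L$ as integration against some $P\in\cp_*$. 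Agreement on $C_{b.Lip}$-cylinders of $\boldsymbol{X}$ then forces $P\circ\boldsymbol{X}^{-1}=\nu$.

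The main obstacle is the core step $\mathcal{Q}_{\max}\subseteq\cp$: the naive test function $\psi(X_1,\ldots,X_{n-1})\vp(X_n)$ only gives the factorized bound $\int\psi\left(\int\vp\,\kappa_n\right)d\mu_{n-1}\leq\be[\vp(X_1)]\,\be[\psi(X_1,\ldots,X_{n-1})]$, which is too weak to conclude a pointwise a.e.\ inequality because $\be[\psi]$ may far exceed $E_{\mu_{n-1}}[\psi]$. Recentering $\vp$ by $c=\be[\vp(X_1)]$ is precisely the trick that cancels the sublinear expectation of the test function to zero and transmutes the one-sided inequality $E_\mu\leq\be$ into a genuine a.e.\ kernel bound. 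The remaining manipulations---Borel measurability of $y\mapsto\int\vp\,\kappa_n(y,\cdot)$, Lipschitz approximation of indicator functions of super-level sets, and a countable union of null sets over dense $\vp$---are standard measure theory.
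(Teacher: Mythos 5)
Your proposal is correct in substance but takes a genuinely different route from the paper. The paper does not argue through a cycle of inclusions: it first identifies $\cp=\mathcal{Q}_{\max}$ by quoting the conditional-expectation characterization of $\mathcal{Q}_{\max}$ from Theorem 2.9 of Guo et al. \cite{gll3} and disintegrating into kernels, and then obtains the full triple equality by showing that all three sets are convex and weakly compact (tightness from (\ref{ui-4}), closedness via Lemma 4.2(b) of Zhang \cite{Zhang2021}) and induce the same upper expectation on Lipschitz cylinder functions, so that Corollary 2.8 of \cite{LL} forces the sets to coincide. You replace both imported ingredients by direct arguments: the recentred test function $\psi\cdot(\vp-c)$, whose sublinear expectation vanishes by independence, identical distribution, translation invariance and positive homogeneity, amounts to a self-contained proof of the conditional characterization the paper cites; and the Hahn--Banach extension of $L\leq\be$ followed by the robust Daniell--Stone theorem (this is where regularity enters, exactly as in the paper's construction of $\cp_*$) replaces the compactness-plus-uniqueness machinery and gives $\cp\subseteq\cp_*\circ\boldsymbol{X}^{-1}$ element by element. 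Your route is more elementary and never needs weak compactness of the three sets; the paper's is shorter given the cited results and also establishes compactness facts it reuses elsewhere. Two technical points you should tighten: (i) $C_{b.Lip}(\br)$ has no countable sup-norm dense subset, so the single-null-set argument should instead use a countable determining family (for instance rational piecewise-linear functions together with tail cutoffs $\min\{1,(|x|-R)^+\}$, with (\ref{ui-4}) supplying the uniform tail control), and you must still pass from bounded Lipschitz test functions to the unbounded ones appearing in the definition of $\cp_1$ by truncation, again via (\ref{ui-4}); (ii) the well-definedness of $L$ on the cylinder subspace (two different $\vp$'s may represent the same random variable in $\mathcal{H}$) should be recorded --- it follows by applying the $\mathcal{Q}_{\max}$-domination of $\nu$ to plus and minus the difference of the two representatives, whose composition with $\boldsymbol{X}$ is identically zero, so both bounds are zero. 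Both repairs are routine and do not affect your architecture.
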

\begin{proof}
Let $\{Y_i\}_{i\in\bn}$ be the canonical random variables on $(\br^\bn,\cb(\br^\bn))$.

According to Theorem 2.9 in Guo et al. \cite{gll3}, we obtain
$$\mathcal{Q}_{\max}=\{\mu: E_{\mu}[\vp(Y_n)|Y_1,\cdots,Y_{n-1}]\leq\mathbb{E}^{\cp}[\vp(Y_1)], \ \forall n\in\bn, \forall \vp\in C_{Lip}(\br^n)\},$$
with convention $E_\mu[\vp(Y_1)|Y_0]=E_\mu[\vp(Y_1)]$ if $n=1$.

For each measure $Q \in\mathcal{Q}_{\max}$, we have its desintegration as, $\forall n\in\bn$, $ \forall A\in\cb(\br^n)$,
$$Q(A\times\br^{\bn-n})=\int_\br\mu_1(dx_1)\int_{\br}\kappa_{2}(x_1,dx_2)\cdots\int_\br I_{A}\kappa_n(x_1,\cdots,x_{n-1},dx_{n}),$$
where $\kappa_i, i\geq 2$ are probability kernels and $\mu_1$ is the probability measure on $(\br,\mathcal{\br})$.

Since $E_{\mu_1}[\vp]\leq\mathbb{E}^{\cp}[\vp(Y_1)]=\be[\vp(X_1)]$,  $\forall \vp\in C_{Lip}(\br)$, we obtain $\mu_1\in\cp_1$. For each $n\geq 2$, we notice
$$\begin{aligned}
E_Q[\vp(Y_n)|Y_1,\cdots,Y_{n-1}]=& \int_\br\vp(x)\kappa_n(x_1,\cdots,x_{n-1},dx)|_{x_1=Y_1,\cdots,x_{n-1}=Y_{n-1}}\\
\leq & \mathbb{E}^{\cp}[\vp(Y_n)]=\be[\vp(X_1)], \ \ \ \ \forall \vp\in C_{Lip}(\br),
\end{aligned}$$
hence, $\kappa_n\in\cp_1$, which implies $\cp\supset\mathcal{Q}_{\max}$, and the opposite is trival. Therefore, we have $\cp=\mathcal{Q}_{\max}$.

%Finally, we have $\cp=\mathcal{Q}_{\max}$, and $\mathcal{Q}_{\max}$ is convex and weakly compact.
Obviously, all of $\cp_*\circ \boldsymbol{X}^{-1}$, $\cp$ and $\mathcal{Q}_{\max}$ are convex.

Noting that
$$\begin{aligned}
\sup_{\mu \in \cp_*\circ \boldsymbol{X}^{-1}} \mu \lt(\bigcap_{i=1}^{\infty}\{|Y_i|>\lambda\}\rt)&\leq \sup_{\mu \in \cp_*\circ \boldsymbol{X}^{-1}}\sup_{i\in\bn} \ \mu \lt(|Y_i|>\lambda\rt) \\
&= \sup_{P \in \cp_*}P \lt(|X_1|>\lambda\rt)\leq
2 \be\left[\left(|X_1|-\frac{\lambda}{2}\right)^+\right]\to 0,
\end{aligned}$$
as $\lambda\to \infty$, the relatively weak compactness of $\cp_*\circ \boldsymbol{X}^{-1}$ follows.
By the same argument of the proof of Lemma 4.2(b) in Zhang \cite{Zhang2021}, we yield that $\cp_*\circ \boldsymbol{X}^{-1}$ is also close, therefore it is weakly compact.

Following the similar argument, we conclude that $\mathcal{Q}_{\max}$ is weakly compact as well.

It can be easily seen that
$$\mathbb{E}^{\cp_*\circ \boldsymbol{X}^{-1}}[\vp]=\mathbb{E}^\cp[\vp]=\mathbb{E}^{\mathcal{Q}_{\max}}[\vp], \ \ \forall n\in\bn, \vp\in C_{Lip}(\br^n).$$
So $\cp_*\circ \boldsymbol{X}^{-1}=\cp=\mathcal{Q}_{\max}$ by Corollary 2.8 in \cite{LL}.

\end{proof}

\begin{remark}\label{r48} With (\ref{pq}), we have
\begin{equation}\label{l-id}
\be[f(X_1,\cdots,X_n)]=\mathbb{E}^{\cp}[f(Y_1,\cdots,Y_n)]
\end{equation}
holds for each $n\in\bn$ and bounded $\cb(\br^n)$-measurable function $f$.

Theorem 3.1 in Guo et al. \cite{gll3} shows that for general sublinear expectation $\be$ and i.i.d sequence $\{X_i\}_{i\in\bn}$,  (\ref{l-id}) is valid for $f\in C_{Lip}(\br^n)$. Proposition \ref{l-m} is actually an extension of such result.

\end{remark}

Now the functional CLT with mean-uncertainty on the canonical space can be easily extended.
%to the sublinear expectation space associated with regular sublinear expectation.

\begin{theorem}
Let $\{X_i\}_{i\in\bn}$ be an i.i.d. sequence on sublinear expectation space $(\Omega,\mathcal{H},\be)$ with
$$\lim_{\lambda\rightarrow\infty}\be[(|X_1|^2-\lambda)^+]=0.$$
We further assume that $\be$ is regular and associated with $\cp_*$ defined by (\ref{eqp}).
For each $P\in\cp_*$, let $\tilde{X}^P_i=X_i-E_P[X_i|\cf_{i-1}]$ and $S^{(n)}_P=(S^P_i)_{1\leq i\leq n}$ with $S^P_i=\frac{1}{\sqrt{n}}\sum_{j=1}^i\tilde{X}^P_j$. Then for each continuous function $\vp:C([0,1])\rightarrow\br$ satisfying $|\vp(\omega)|\leq C(1+||\omega||_\infty)$ for some constants $C>0$,
\begin{equation}\label{fclt-r}
\lim_{n\rightarrow\infty}\sup_{P\in\cp_*}E_P\lt[\vp\lt(\hat{S_P^{(n)}}\rt)\rt]= \mathbb{E}^{\cp_\Theta}[\vp],
\end{equation}
where $\Theta=[\underline{V}(X_1),\overline{V}(X_1)]$.
\end{theorem}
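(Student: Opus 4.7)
The plan is to pull the problem back from the general sublinear expectation space $(\Omega,\mathcal{H},\be)$ to the canonical space $(\br^\bn,\cb(\br^\bn),\cp)$ via the measurable map $\boldsymbol{X}=(X_1,X_2,\ldots):\Omega\to\br^\bn$, and then invoke Theorem \ref{fclt}. The representation result in Proposition \ref{l-m}, which asserts $\cp_*\circ\boldsymbol{X}^{-1}=\cp$, furnishes exactly the bridge required.

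First, I would fix $P\in\cp_*$ and set $Q:=P\circ\boldsymbol{X}^{-1}\in\cp$. Writing $\{Y_i\}_{i\in\bn}$ for the canonical coordinate process on $\br^\bn$, the standard change-of-variables for regular conditional expectations gives
$$E_P[X_j\mid\cf_{j-1}](\omega)=E_Q[Y_j\mid\cf_{j-1}](\boldsymbol{X}(\omega))\quad P\text{-a.s.},$$
for each $1\leq j\leq n$. Hence $\tilde{X}_j^P(\omega)=\tilde{Y}_j^Q(\boldsymbol{X}(\omega))$, and after the piecewise linear interpolation,
$$\hat{S_P^{(n)}}(\omega)=\hat{S_Q^{(n)}}(\boldsymbol{X}(\omega))\quad P\text{-a.s.}$$
Pushing forward the measure then yields $E_P\lt[\vp(\hat{S_P^{(n)}})\rt]=E_Q\lt[\vp(\hat{S_Q^{(n)}})\rt]$.

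Second, by Proposition \ref{l-m} the assignment $P\mapsto Q$ is a surjection from $\cp_*$ onto $\cp$, so
$$\sup_{P\in\cp_*}E_P\lt[\vp(\hat{S_P^{(n)}})\rt]=\sup_{Q\in\cp}E_Q\lt[\vp(\hat{S_Q^{(n)}})\rt].$$
Moreover, Proposition \ref{l-m} identifies the marginal law of $X_1$ under $\be$ with that of $Y_1$ under $\mathbb{E}^{\cp}$; combined with the integrability condition (\ref{ui-4}) (which, upon squaring, is equivalent to the hypothesis on $|X_1|^2$ here), this forces $\overline{V}(X_1)=\overline{V}(Y_1)$ and $\underline{V}(X_1)=\underline{V}(Y_1)$, so the interval $\Theta$ is the same on both sides. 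Theorem \ref{fclt} then delivers the limit $\mathbb{E}^{\cp_\Theta}[\vp]$, first for $\vp\in C_{b.Lip}(\Omega_0)$ and then for continuous $\vp$ with linear growth by the same Doob-inequality-plus-truncation argument used at the end of the proof of Theorem \ref{fclt}.

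The main technical point is the pullback identity for conditional expectations. Since $\cf_{j-1}$ on $\Omega$ is $\sigma(X_1,\ldots,X_{j-1})$ and the canonical $\cf_{j-1}$ is $\sigma(Y_1,\ldots,Y_{j-1})$, this reduces to the standard fact that if $g_j$ is a Borel version of $E_Q[Y_j\mid Y_1,\ldots,Y_{j-1}]$, then $g_j(X_1,\ldots,X_{j-1})$ serves as $E_P[X_j\mid\cf_{j-1}]$ $P$-a.s., an immediate consequence of $P\circ\boldsymbol{X}^{-1}=Q$. No additional regularity hypothesis beyond what is already invoked to guarantee $\cp_*\ne\emptyset$ and to apply Proposition \ref{l-m} is required.
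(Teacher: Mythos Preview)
Your proposal is correct and follows essentially the same route as the paper: push forward each $P\in\cp_*$ to $Q=P\circ\boldsymbol{X}^{-1}$, use the pullback identity for conditional expectations to match $\hat{S_P^{(n)}}$ with its canonical counterpart, invoke Proposition \ref{l-m} to identify the two suprema, and then apply Theorem \ref{fclt}. The only minor remark is that Theorem \ref{fclt} is already stated for continuous $\vp$ with linear growth, so once the reduction to the canonical space is in place you may apply it directly without repeating the Doob/truncation extension.
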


\begin{proof}

Let $\{Y_i\}_{i\in\bn}$ be the canonical random variables on $(\br^\bn,\cb(\br^\bn))$ and $\cp_*\circ \boldsymbol{X}^{-1}$,$\cp$ be constructed as in Proposition \ref{l-m}.

For each $P\in\cp$, let $\tilde{Y}^P_i=Y_i-E_P[Y_i|\mathcal{G}_{i-1}]$, where $\mathcal{G}_{i-1}=\sigma(Y_i, 1\leq j\leq i)$, and $T_P^{(n)}=(T^P_i)_{1\leq i\leq n}$, where $T^P_i=\frac{1}{\sqrt{n}}\sum_{j=1}^i \tilde{Y}^P_j$.

Notice that if $\mu$ is the probability measure on $(\br^\bn,\cb(\br^\bn))$ induced by $\{X_i\}_{i\in\bn}$ under $P$, then $E_P[X_i|\mathcal{F}_{i-1}]\overset{\text{d}}{=} E_\mu[Y_i|\mathcal{G}_{i-1}]$.
Therefore, for $\forall \vp\in C_{b.Lip}(\Omega_0)$,
$$\begin{aligned}
\sup_{P\in\cp_*}E_P\lt[\vp\lt(\hat{S_P^{(n)}}\rt)\rt]&=\sup_{\mu\in\cp_*\circ \boldsymbol{X}^{-1}}E_{\mu}\lt[\vp\lt(\hat{T_P^{(n)}}\rt)\rt]\\
&= \sup_{P\in\cp}E_P\lt[\vp\lt(\hat{T_P^{(n)}}\rt)\rt]\rightarrow\mathbb{E}^{\cp_\Theta}[\vp],\ \  n\rightarrow\infty,
\end{aligned}$$
where the convergence is guaranteed by Theorem \ref{fclt}. The proof now is completed.

\end{proof}

\subsection{Functional central limit theorem without assumption of regularity}
Now we remove the assumption of regularity of $\be$. For fixed sequence of random variables $\{X_i\}_{1\leq i\leq n}$, let $\cp_*^{(n)}$ denote the class of all probability measures on $(\Omega,\cf_n)$ that dominated by $\be$ in the following sense:
\begin{align*}
\cp_*^{(n)}=\{P: &E_P[\vp(X_1,\cdots,X_n)]\leq\be[\vp(X_1,\cdots,X_n)], \ \forall \vp\in C_{b.Lip}(\br^n)\}.
\end{align*}
where $\cf_n=\sigma(X_1,\cdots,X_n)$ is the natural filtration with convention $\cf_0=\{\emptyset,\Omega\}$.

Denote $\mathcal{L}(\cf_n)$ the space of all $\cf_n$-measurable functions. Then $\be$ is regular on $\mathcal{H}\bigcap\mathcal{L}(\cf_n)$ (see Theorem 10 in Hu and Li \cite{HL}) and the domain of $\be$ can be enlarged naturally from $\mathcal{H}\bigcap\mathcal{L}(\cf_n)$ to $\mathcal{L}(\cf_n)$, specifically,
$$\be[X]=\sup_{P\in\cp_*^{(n)}}E_P[X], \ \forall X\in\mathcal{L}(\cf_n).$$

We immediately have the functional CLT without the regularity of $\be$ by the similar argument as in the previous subsection.
\begin{theorem}\label{fcltsub}
Let $\{X_i\}_{i\in\bn}$ be an i.i.d. sequence on sublinear expectation space $(\Omega,\mathcal{H},\be)$ with
$$\lim_{\lambda\rightarrow\infty}\be[(|X_1|^2-\lambda)^+]=0,$$
and $\cp_*^{(n)}$ be the probability set dominated by $\be$ on $\mathcal{H}\cap\mathcal{L}(\cf_n)$. For each $P\in\cp_*^{(n)}$, let $\tilde{X}^P_i=X_i-E_P[X_i|\cf_{i-1}]$ and $S^{(n)}_P=(S^P_i)_{1\leq i\leq n}$ with $S^P_i=\frac{1}{\sqrt{n}}\sum_{j=1}^i\tilde{X}^P_j$. Then for each continuous function $\vp:C([0,1])\rightarrow\br$ satisfying $|\vp(\omega)|\leq C(1+||\omega||_\infty)$ for some constants $C>0$,
$$\lim_{n\rightarrow\infty}\sup_{P\in\cp_*^{(n)}}E_P\lt[\vp\lt(\hat{S_P^{(n)}}\rt)\rt] = \mathbb{E}^{\cp_\Theta}[\vp],$$
where $\Theta=[\underline{V}(X_1),\overline{V}(X_1)]$.
\end{theorem}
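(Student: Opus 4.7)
The plan is to reduce this non-regular statement to the canonical-space functional CLT (Theorem \ref{fclt}) via a finite-horizon analogue of Proposition \ref{l-m}. The key observation is that the statistic $\hat{S_P^{(n)}}$ depends on $P \in \mathcal{P}_*^{(n)}$ only through its restriction to $\mathcal{F}_n = \sigma(X_1,\ldots,X_n)$, and by Hu--Li \cite{HL} the sublinear expectation $\hat{\mathbb{E}}$ is regular on $\mathcal{H} \cap \mathcal{L}(\mathcal{F}_n)$. Consequently $\mathcal{P}_*^{(n)}$ is non-empty and represents $\hat{\mathbb{E}}$ on $\mathcal{L}(\mathcal{F}_n)$ in exactly the same manner as the globally defined $\mathcal{P}_*$ did in the previous subsection. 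This is what makes it plausible to carry out the earlier argument one level $n$ at a time.

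First I would establish the finite-horizon representation. Let $\mathcal{P}_n$ denote the set of $n$-dimensional marginals of measures in $\mathcal{P}$ (the canonical-space set built from the common marginal family $\mathcal{P}_1$ of $X_1$); equivalently, $\mathcal{P}_n$ is the collection of probability measures on $(\mathbb{R}^n,\mathcal{B}(\mathbb{R}^n))$ admitting a kernel decomposition $\mu_1(dx_1)\kappa_2(x_1,dx_2)\cdots\kappa_n(x_1,\ldots,x_{n-1},dx_n)$ with each factor in $\mathcal{P}_1$. Imitating the proof of Proposition \ref{l-m}, I would verify
\[
\mathcal{P}_*^{(n)} \circ (X_1,\ldots,X_n)^{-1} = \mathcal{P}_n,
\]
using (i) the finite-dimensional identity $\hat{\mathbb{E}}[f(X_1,\ldots,X_n)] = \mathbb{E}^{\mathcal{P}}[f(Y_1,\ldots,Y_n)]$ for $f \in C_{Lip}(\mathbb{R}^n)$ noted in Remark \ref{r48}, (ii) the weak compactness of both sides (which follows from the moment condition \eqref{c2} as in the tightness argument via $\sup_P P(|X_1|>\lambda)\to 0$), (iii) convexity, and (iv) Corollary 2.8 in \cite{LL} to match two sets of probability measures from the equality of their upper expectations on $C_{Lip}(\mathbb{R}^n)$.

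With this identification in hand, for each fixed $n$ and $\varphi \in C_{b.Lip}(\Omega_0)$,
\[
\sup_{P \in \mathcal{P}_*^{(n)}} E_P\bigl[\varphi(\hat{S_P^{(n)}})\bigr] = \sup_{Q \in \mathcal{P}_n} E_Q\bigl[\varphi(\hat{T_Q^{(n)}})\bigr] = \sup_{Q \in \mathcal{P}} E_Q\bigl[\varphi(\hat{T_Q^{(n)}})\bigr],
\]
where $\hat{T_Q^{(n)}}$ denotes the analogous interpolated path built from the canonical coordinates $(Y_1,\ldots,Y_n)$ and $Q$-conditional expectations; the last equality holds because the statistic only uses the first $n$ coordinates, so any extension of a marginal in $\mathcal{P}_n$ to a full sequence in $\mathcal{P}$ yields the same integral. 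Theorem \ref{fclt} then gives convergence of the right-hand side to $\mathbb{E}^{\mathcal{P}_\Theta}[\varphi]$, and the extension from $C_{b.Lip}(\Omega_0)$ to continuous $\varphi$ of linear growth is identical to the end of the proof of Theorem \ref{fclt} (via the Doob inequality bound together with a truncation argument). The main obstacle I anticipate is the finite-horizon representation step: the original Proposition \ref{l-m} exploited a single globally regular $\mathcal{P}_*$, whereas here the dominating family $\mathcal{P}_*^{(n)}$ varies with $n$, so one must carefully verify that each individual $\mathcal{P}_*^{(n)}$ (and its pushforward) inherits the weak compactness needed to apply the Li--Lin matching lemma. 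Once this finite-level machinery is secured, the passage $n\to\infty$ is essentially a translation of Theorem \ref{fclt}.
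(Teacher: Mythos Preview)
Your proposal is correct and follows essentially the same route the paper indicates: the paper simply states that Theorem \ref{fcltsub} follows ``by the similar argument as in the previous subsection,'' meaning one replaces the global representation of Proposition \ref{l-m} by its finite-horizon analogue $\mathcal{P}_*^{(n)}\circ(X_1,\ldots,X_n)^{-1}=\mathcal{P}_n$ (using regularity of $\hat{\mathbb{E}}$ on $\mathcal{H}\cap\mathcal{L}(\mathcal{F}_n)$ from \cite{HL}) and then transfers to the canonical space to invoke Theorem \ref{fclt}. You have in fact spelled out more of the details than the paper does.
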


Now we generalize CLT with zero-mean in Peng \cite{P2019} to the case of mean-uncertainty.
\begin{corollary}\label{cltv}
With the same assumptions in Theorem \ref{fcltsub}, for each $\vp\in C(\br)$ with linear growth we have
\begin{equation}
\label{th1}\lim_{n\rightarrow\infty}\sup_{P\in\cp_*^{(n)}}E_P\lt[\vp\lt(\frac{\sum_{i=1}^n(X_i-E_P[X_i|\cf_{i-1}])}{\sqrt{n}}\rt)\rt]= \eg[\vp(\xi)],
\end{equation}
where  $\xi\sim\mathcal{N}(0,[\underline{V}(X_1),\overline{V}(X_1)])$.
\end{corollary}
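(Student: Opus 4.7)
The plan is to reduce the one-dimensional claim (\ref{th1}) to the path-space functional CLT of Theorem~\ref{fcltsub} by evaluating a suitable path functional at time $t=1$. Given $\vp\in C(\br)$ with $|\vp(x)|\leq C(1+|x|)$, I will set $\tilde\vp:C([0,1])\to\br$ by $\tilde\vp(\omega):=\vp(\omega_1)$. Then $\tilde\vp$ is continuous in the sup-norm on $C([0,1])$ and satisfies $|\tilde\vp(\omega)|\leq C(1+\|\omega\|_\infty)$, so $\tilde\vp$ is an admissible test functional for Theorem~\ref{fcltsub}.

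The key observation is that, from the definition of the interpolation operator, at $t=1$ the coefficient of the (undefined) entry $x_{n+1}$ vanishes, giving $\hat{S_P^{(n)}}(1)=S_n^P=\frac{1}{\sqrt{n}}\sum_{j=1}^n\tilde{X}_j^P$. Hence
$$\tilde\vp\lt(\hat{S_P^{(n)}}\rt)=\vp\lt(\frac{\sum_{j=1}^n(X_j-E_P[X_j|\cf_{j-1}])}{\sqrt{n}}\rt),$$
and applying Theorem~\ref{fcltsub} to $\tilde\vp$ yields
$$\lim_{n\to\infty}\sup_{P\in\cp_*^{(n)}}E_P\lt[\vp\lt(\frac{\sum_{j=1}^n(X_j-E_P[X_j|\cf_{j-1}])}{\sqrt{n}}\rt)\rt]=\mathbb{E}^{\cp_\Theta}[\vp(B_1)],$$
where $\Theta=[\underline{V}(X_1),\overline{V}(X_1)]$.

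The remaining step, which I expect to be the only substantive point, is to identify $\mathbb{E}^{\cp_\Theta}[\vp(B_1)]$ with $\eg[\vp(\xi)]$ for $\xi\sim\mathcal{N}(0,[\underline{V}(X_1),\overline{V}(X_1)])$. By the $G$-Brownian motion representation recalled in Section~\ref{RG}, under $\cp_\Theta$ the canonical process $B$ is a $G$-Brownian motion, so its time-$1$ marginal $B_1$ is $G$-normally distributed with upper and lower variances $\overline{V}(X_1)$ and $\underline{V}(X_1)$; equivalently, by the $G$-heat equation (\ref{g-heat}) at $(t,x)=(0,0)$ with terminal data $\vp$, one has $\mathbb{E}^{\cp_\Theta}[\vp(B_1)]=u(0,0)=\eg[\vp(\xi)]$. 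Since Theorem~\ref{fcltsub} already accommodates continuous test functionals of linear growth, no extra truncation or approximation argument is required, and the corollary follows.
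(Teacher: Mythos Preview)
Your proposal is correct and is precisely the intended derivation: the paper states Corollary~\ref{cltv} without separate proof because it follows from Theorem~\ref{fcltsub} by composing with the evaluation-at-1 functional, and your verification that $\hat{S_P^{(n)}}(1)=S_n^P$ together with the identification $\mathbb{E}^{\cp_\Theta}[\vp(B_1)]=\eg[\vp(\xi)]$ (which the paper records explicitly at the end of Section~\ref{RG} and again after the $G$-heat equation) is exactly what is needed.
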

%\begin{remark}
%Let $\xi\sim\mathcal{N}(0,[\ls^2,\os^2])$, we have
%$$\eg[\vp(\xi)]=\mathbb{E}^{\cp_\Theta}[\vp],$$
%where $\Theta=[\ls^2,\os^2]$ and $\vp$ in the right hand can be regarded as $\vp\in C(\Omega_0)$ with $\vp(\omega)=\vp(\omega_1)$,  $\forall \omega\in\Omega_0$.
%\end{remark}

{{The conditional expectation is known to be the least-squares-best predictor of $X_i$ based on the realization of the historical data $\{X_j\}_{1\leq j\leq i-1}$, therefore can serve as the modification parameter. Formula (\ref{th1}) implies that the properly normalized sum for i.i.d. random variables tends toward the $G$-normal distribution. Corollary \ref{cltv} indicates the broad applicability of $G$-normal distribution. In particular, for i.i.d. random variables with mean-certainty, the corresponding least-square-best predictor are constants equal to their certain mean. That is the exact CLT with certain means in Peng \cite{P2019}. }}

We also have the following CLT in the sense of capacity.
\begin{corollary}\label{coro}
Under the same assumptions of Theorem \ref{fcltsub}, we have
\begin{equation}\label{cap}
\lim_{n\rightarrow\infty}\sup_{P\in\cp_*^{(n)}}P\lt(a\leq\frac{\sum_{i=1}^n(X_i-E_P[X_i|\cf_{i-1}])}{\sqrt{n}}\leq b\rt)=V_\Theta([a,b]),
\end{equation}
where $V_\Theta(A):=\sup_{P\in\cp_\Theta}P(A), \ \forall A\in\cb(\br)$ with $\Theta=[\underline{V}(X_1),\overline{V}(X_1)]$ and $-\infty\leq a<b\leq\infty$.

In particular,
\begin{equation}\label{G}
\lim_{n\rightarrow\infty}\sup_{P\in\cp_*^{(n)}}P\lt(\frac{\sum_{i=1}^n(X_i-E_P[X_i|\cf_{i-1}])}{\sqrt{n}}\leq x\rt)=\lt\{
\begin{aligned}
&\frac{2 \overline{\sigma}}{\overline{\sigma}+\underline{\sigma}} \Phi\left(\frac{x}{\overline{\sigma}}\right),\ &x\leq 0,\\
&1-\frac{2 \underline{\sigma}}{\overline{\sigma}+\underline{\sigma}} \Phi\left(-\frac{x}{\underline{\sigma}}\right), \ &x>0,
\end{aligned}
\rt.
\end{equation}
where $\Phi$ denotes the distribution function of the standard normal and $\os=\sqrt{\overline{V}(X_1)}, \ls=\sqrt{\underline{V}(X_1)}$.
\end{corollary}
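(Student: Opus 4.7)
\textbf{Proof proposal for Corollary \ref{coro}.} The plan is to sandwich the indicator $\mathbf{1}_{[a,b]}$ between continuous cutoffs and pass to Corollary~\ref{cltv}. Fix first $-\infty<a<b<\infty$ and $\ve>0$, and choose continuous piecewise-linear functions $\vp_\ve^-,\vp_\ve^+:\br\to[0,1]$ with $\vp_\ve^-\leq \mathbf{1}_{[a,b]}\leq \vp_\ve^+$, where $\vp_\ve^+=1$ on $[a,b]$, vanishes outside $[a-\ve,b+\ve]$, and $\vp_\ve^-=1$ on $[a+\ve,b-\ve]$, vanishes outside $(a,b)$. Writing $Z_n^P:=\frac{1}{\sqrt n}\sum_{i=1}^n(X_i-E_P[X_i|\cf_{i-1}])$, these inequalities give
$$\sup_{P\in\cp_*^{(n)}}E_P[\vp_\ve^-(Z_n^P)]\;\leq\;\sup_{P\in\cp_*^{(n)}}P(a\leq Z_n^P\leq b)\;\leq\;\sup_{P\in\cp_*^{(n)}}E_P[\vp_\ve^+(Z_n^P)],$$
and Corollary~\ref{cltv} yields $\lim_n\sup_P E_P[\vp_\ve^\pm(Z_n^P)]=\eg[\vp_\ve^\pm(\xi)]$ with $\xi\sim\cn(0,[\underline V(X_1),\overline V(X_1)])$.

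Next I would send $\ve\downarrow 0$. Since $\vp_\ve^+\downarrow\mathbf{1}_{[a,b]}$ and $\vp_\ve^-\uparrow \mathbf{1}_{(a,b)}$ pointwise, and since the upper expectation $\eg=\mathbb{E}^{\cp_\Theta}$ enjoys the downward monotone convergence property on $C_b(\br)$ (Theorem~1.2.7 in Peng~\cite{P2019}), one has $\eg[\vp_\ve^+(\xi)]\downarrow V_\Theta([a,b])$ and $\eg[\vp_\ve^-(\xi)]\uparrow V_\Theta((a,b))$. The two limits coincide because the $G$-normal distribution, being characterized by the non-degenerate $G$-heat equation \eqref{g-heat}, puts no mass on singletons; equivalently, $V_\Theta(\{a\})=V_\Theta(\{b\})=0$ (one can also read this off from the explicit density of $B_1$ under each $P\in\cp_\Theta$, or from the formula \eqref{G} itself once it is established). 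Sandwiching then gives \eqref{cap} for bounded intervals. The cases $a=-\infty$ or $b=+\infty$ are handled by truncating at some large $K$ and observing that Doob/Rosenthal together with the uniform bound
$$\sup_{P\in\cp_*^{(n)}}E_P\!\left[|Z_n^P|^2\right]\leq \overline V(X_1)+o(1)$$
gives the tail estimate $\sup_P P(|Z_n^P|>K)\leq \overline V(X_1)/K^2+o(1)$, letting $K\to\infty$ after the interval limit.

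Formula \eqref{G} is the explicit distribution function of the $G$-normal law $\cn(0,[\underline\sigma^2,\overline\sigma^2])$: apply \eqref{cap} with $a=-\infty$ so that the limit equals $V_\Theta((-\infty,x])=\eg[\mathbf{1}_{(-\infty,x]}(\xi)]$, which can be computed by a standard optimization over martingale laws in $\cp_\Theta$ as follows. For $x\leq 0$ the supremum in $\sup_{P\in\cp_\Theta}P(B_1\leq x)$ is attained by the law that uses the largest admissible variance $\overline\sigma^2$ whenever the path is below $0$ and $\underline\sigma^2$ when it is above, yielding a time-changed Brownian motion whose value at time $1$ is a mixture of $\cn(0,\overline\sigma^2)$ and $\cn(0,\underline\sigma^2)$; the weighting $\frac{2\overline\sigma}{\overline\sigma+\underline\sigma}$ arises from gluing at the origin (compare the Barenblatt equation derivation in Peng~\cite{P2019}). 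The case $x>0$ is symmetric.

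The main obstacle is the interchange of the limits $n\to\infty$ and $\ve\to 0$ in the presence of the supremum over the huge family $\cp_*^{(n)}$: one must know that the two-sided continuous approximation gap $\eg[\vp_\ve^+(\xi)]-\eg[\vp_\ve^-(\xi)]$ tends to zero, i.e.\ that the $G$-normal capacity does not charge the boundary $\{a,b\}$. This is the non-degeneracy ingredient and is where the $G$-heat PDE (or the explicit law) is really used; everything else is routine sandwiching plus Corollary~\ref{cltv} and a standard truncation for unbounded intervals.
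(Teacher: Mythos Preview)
Your proposal is correct and follows essentially the same sandwiching strategy as the paper: approximate $\mathbf{1}_{[a,b]}$ from above and below by $C_{b.Lip}$ functions, apply Corollary~\ref{cltv}, and close the gap using the no-atom property of the $G$-normal capacity. The paper packages the last step slightly differently---it bounds $\eg[g^\ve(\xi)]$ and $\eg[f^\ve(\xi)]$ by $V_\Theta([a\pm\ve,b\mp\ve])$ and cites Corollary~3.8 of Hu--Wang--Zhang~\cite{HWZ} for $V_\Theta([x-\ve,x+\ve])\to 0$, then simply invokes Peng--Yang--Yao~\cite{Yang} for the explicit formula~\eqref{G}---but the architecture is identical; note also that your separate truncation step for unbounded intervals is unnecessary, since one-sided Lipschitz cutoffs for half-lines already lie in $C_{b.Lip}(\br)$.
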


\begin{proof}
For any $0<\ve<\frac{b-a}{2}$, there exist $f^\ve,g^\ve\in C_{b.Lip}(\br)$ such that
$${I}_{[a+\ve,b-\ve]}(x)\leq g^\ve(x)\leq {I}_{[a,b]}(x)\leq f^\ve(x)\leq {I}_{[a-\ve,b+\ve]}(x).$$
Then we have
\begin{align*}\eg[g^\ve(\xi)]&=\liminf_{n\rightarrow\infty}\sup_{P\in\cp_*^{(n)}}E_P\lt[g^\ve\lt(\frac{\sum_{i=1}^nX_i-E_P[X_i|\cf_{i-1}]}{\sqrt{n}}\rt)\rt]\\
&\leq\liminf_{n\rightarrow\infty}\sup_{P\in\cp_*^{(n)}}P\lt(a\leq\frac{\sum_{i=1}^nX_i-E_P[X_i|\cf_{i-1}]}{\sqrt{n}}\leq b\rt)\\
&\leq \limsup_{n\rightarrow\infty}\sup_{P\in\cp_*^{(n)}}P\lt(a\leq\frac{\sum_{i=1}^nX_i-E_P[X_i|\cf_{i-1}]}{\sqrt{n}}\leq b\rt)\\
&\leq\limsup_{n\rightarrow\infty}\sup_{P\in\cp_*^{(n)}}E_P\lt[f^\ve\lt(\frac{\sum_{i=1}^nX_i-E_P[X_i|\cf_{i-1}]}{\sqrt{n}}\rt)\rt]=\eg[f^\ve(\xi)]
\end{align*}
We note that
$$V_\Theta([a+\ve,b-\ve])\leq\eg[g^\ve(\xi)]\leq\eg[f^\ve(\xi)]\leq V_\Theta([a-\ve,b+\ve]),$$
and for each $x\in\br$, by Corollary 3.8 in Hu et al. \cite{HWZ}, $V_\Theta([x-\ve,x+\ve])\rightarrow 0$ as $\ve\rightarrow 0$, which implies (\ref{cap}).

(\ref{G}) is the direct deduction of (\ref{cap}) combining with the calculation of $V_{\Theta}$ in Peng et al. \cite{Yang}.
\end{proof}

%\begin{remark}
%Peng et al. \cite{Yang} calculates the capacity induced by $G$-expectation as follows:
%$$V_{\Theta}(A)=\int_{A} \frac{\sqrt{2}}{(\overline{\sigma}+\underline{\sigma}) \sqrt{\pi}}\left[e^{-y^2 /\left(2 \overline{\sigma}^2\right)} I(y \leq 0)+e^{-y^2 /\left(2 \overline{\sigma}^2\right)} I(y>0)\right] d y,\ \  \forall A\in \cb(\br)$$
%where $\Theta=[\overline{\sigma}, \underline{\sigma}]$
%\end{remark}

\section{Applications and examples}

In this section, we will present some applications of our functional CLT  and a counterexample to show that CLT does not hold with finite second moment condition.

\subsection{Applications: Two-armed bandit problem}

Motivated by some practical problem concerning sequential choice, such as clinical trial, the bandit problem attracted a lot of attention of researchers (see Chen et al. \cite{CEZ}).  As an application of our results, we consider a CLT for two-armed bandit problem including the case of mean-uncertainty, which generalizes the existing results of the case of mean-certainty in \cite{CEZ}.

Let $(\Omega_0, \mathcal{F}, P)$ be a probability space. The random payoffs from arms L and R are represented by two bounded random variables $W^L$ and $W^R$ respectively. Let $\{W_i^L\}_{i=1}^{\infty}$ and $\{W_i^R\}_{i=1}^{\infty}$ be the i.i.d copies of $W^L$ and $W^R$ lie in the product space $\Omega=\prod_{i=1}^{\infty}\Omega_i$, where $\Omega_i=\Omega_0$, $W_i^L$ and $W_i^R$ denotes the random payoffs from arms L and R at round $i$. A sampling strategy is defined as a infinite sequence $s:=\{s_i\}_{i=1}^{\infty}$, and the reward at round $i$ under strategy $s$ is defined as
$$Z_i^s:=s_i W_i^L + (1-s_i)W_i^R, $$
where $s_i$, with value $0$ or $1$, denotes the choice of action at round $i$ depending on the history of past actions and rewards $(s_1,Z_1^s,\cdots,s_{i-1}, Z_{i-1}^s)$. The corresponding set of strategies is denoted as $\cs$.

Let $\{X_i\}_{i=1}^{\infty}$ denotes the canonical sequence on the canonical space $(\br^\bn,\cb(\br^\bn))$. Set
%$$\be[\vp(X_1)]:=E_Q[\vp(W^L)]\vee E_Q[\vp(W^R)],\ \ \forall \vp \in C_{\operatorname{Lip}}(\br)$$and% a set of probability on $\Omega_1$ as
$$\cp_0:=\lt\{\mu:E_\mu[\vp(X_1)]\leq \be[\vp(X_1)]:=E_P[\vp(W^L)]\vee E_P[\vp(W^R)],\ \forall \vp \in C_{{Lip}}(\br)\rt\}.$$
Construct $\cp$ the set of probability measure on $\br^\bn$ following the procedure in (\ref{e1}) with $\cp_i=\cp_0$, $i\geq 1$. Then $\{X_i\}_{i=1}^{\infty}$ becomes an i.i.d sequence on the sublinear expectation space $(\br^\bn,\cb(\br^\bn),\EE)$. %According to Theorem 2.9 from Guo et al. \cite{gll3}, $\cp$ has another expression as$$\cp:=\lt\{\rt\}. $$
By a simple calculation, we obtain
$$\ou:=\EE[X_1]=\mu_L\vee \mu_R,\ \ \ \lu: =-\EE[-X_1]=\mu_L\wedge \mu_R,$$
$$\os^2:=\overline{V}(X_1)=\lt\{
\begin{aligned}
&\frac{1}{4}\lt[(\mu_L-\mu_R)^2 +\lt(\frac{\sigma^2_L -\sigma^2_R}{\mu_L -\mu_R}\rt)^2\rt]+\frac{\sigma^2_L+\sigma^2_R}{2},\ \text{if } \mu_L \neq \mu_R,\\
&\sigma^2_L \vee \sigma^2_R,\ \text{if } \mu_L = \mu_R,
\end{aligned}\rt.$$
$$\ls^2:=\underline{V}(X_1)=\sigma^2_L \wedge \sigma^2_R,$$
with
$$\mu_L:=E_P [W^L],\ \ \ \mu_R:=E_P [W^R],$$
$$\sigma^2_L:=\operatorname{Var}(W^L),\ \ \ \sigma^2_R:=\operatorname{Var}(W^R).$$

%The reward at round $i$ under the strategy $\theta$ is given by
%$$Z_i^\theta=\left\{\begin{array}{l}
%W_i^L, \text { if } \theta_i=1, \\
%W_i^R, \text { if } \theta_i=2.\end{array}\right.$$
%Denote $\mathcal{H}_n^\theta=\sigma\left\{Z_1^\theta, \cdots, Z_n^\theta\right\}$ and $\mathcal{H}_0^\theta=\{\emptyset, \Omega\}$. A sampling strategy $\theta$ is called admissible if $\theta_n$ is $\mathcal{H}_{n-1}^\theta$-measurable for all $n>1$. The set $\vartheta$ denotes the collection of all admissible sampling strategies.
For each strategy $s\in\cs$, the sequence $\{Z_i^s\}_{i=1}^{\infty}$ induces a probability measure $P^s$ on $(\br^\bn,\cb(\br^\bn))$ under $P$. Therefore
$$\begin{aligned}
E_{P^s}[\vp(X_1,X_2)]=&E_P[\vp(Z_1^s, Z_2^s)]=E_P\lt[E_P\lt[\vp(Z_1^s, Z_2^s)|Z_1^s\rt]\rt]\\
=&E_P\lt[E_P\lt[\vp(x, y W_2^L + (1-y)W_2^R)\rt]|_{x=Z_1^s,y=s_2}\rt]\\
\leq & E_P\lt[\mathbb{E}^{\cp_0}\lt[\vp(x, X_2)\rt]|_{x=s_1 W_1^L + (1-s_1)W_1^R}\rt]\\
\leq &\mathbb{E}^{\cp_0}\lt[\mathbb{E}^{\cp_0}\lt[\vp(x, X_2)\rt]|_{x=X_1}\rt]=\EE[\vp(X_1,X_2)],
\end{aligned}$$
since $s_1,s_2$ only take value $0$ or $1$, $s_2$ depends on $(s_1,Z_1^s)$ and $W_2^L,W_2^R$ are independent of $Z_1^s$. By a recursive argument, we prove that
$$E_{P^s}[\vp(X_1,\cdots, X_i)]\leq \EE [\vp(X_1,\cdots, X_i)],\ \ \forall i\geq 1.$$
Noticing the convexity and weak compactness of $\cp$, we have $P^s \in\cp$. Hence for each $i\geq 1$ and $\cb(\br^i)$-measurable function $f$, we obtain
$$\sup_{s\in\cs}E_P\lt[f\lt(Z_1^s,\cdots,Z_i^s\rt)\rt]\leq \EE \lt[f\lt(X_1,\cdots,X_i\rt)\rt].$$

On the other hand, for fixed $i\geq 1$ and $\cb(\br^i)$-measurable function $f$, %there exists $P^f\in\cp$ such that
%$$\begin{aligned}
%\EE \lt[f\lt(X_1,\cdots,X_i\rt)\rt]=&E_{P^f}\lt[f\lt(X_1,\cdots,X_i\rt)\rt]\\
%=&\int_\br \mu^f(dx_1)%\int_\br \kappa^f_2(x_1,dx_2)
%\cdots \int_\br f(x_1,\cdots,x_i)\kappa^f_i (x_1,\cdots,x_{i-1},dx_i).
%\end{aligned}$$
we construct a $i$-step strategy $s^*\in \cs$ as follows:
\begin{enumerate}
  \item[$\bullet$] At round $1$, $s^*_1=1$ if $$E_P[h_1(W^L) - h_1(W^R)]\geq 0$$ and $s^*_1=0$ if otherwise, where $$h_1(x):=%\int_\br \kappa^f_2(x,dy_2)\cdots \int_\br f(x,y_2,\cdots,y_i)\kappa^f_i (x ,y_2,\cdots,y_{i-1},dy_i);
      \EE\lt[f(x, X_2,\cdots,X_i)\rt]$$
  \item [$\bullet$] At round $2$, $s^*_2=1$ if
  $$E_P\lt[h_2(x_1,W^L)- h_2(x_1,W^R)\rt]\Big|_{x_1=Z_1^{s^*}}\geq 0$$
      and $s^*_2=0$ if otherwise, where
      $$h_2(x_1,x_2):=\EE\lt[f(x_1,x_2,X_3,\cdots,X_i)\rt]$$
  \item [$\bullet$]   $\cdots \ \cdots $

  \item [$\bullet$] At round $i$, $s^*_i=1$  if
  $$E_P\lt[f(x_1,\cdots,x_{i-1},W^L)-f(x_1,\cdots,x_{i-1},W^R)\rt]\Big|_{(x_1,\cdots,x_{i-1})=(Z_1^{s^*},\cdots,Z_{i-1}^{s^*})}\geq 0$$
      and $s^*_i=0$ if otherwise.
\end{enumerate}
It is easy to check that
$$\begin{aligned}
E_{P^{s^*}}\lt[f\lt(X_1,\cdots,X_i\rt)\rt]&=\EE\lt[\EE\lt[\cdots \EE\lt[f(x_1,\cdots,x_{i-1},X_i)\rt]\big|_{x_{i-1}=X_{i-1}}\cdots\rt]\Big|_{x_1=X_1}\rt]\\
&=\EE\lt[f(X_1,\cdots,X_i)\rt].
\end{aligned}$$
 Therefore
$$\sup_{s\in\cs}E_P\lt[f\lt(Z_1^s,\cdots,Z_i^s\rt)\rt]=\EE \lt[f\lt(X_1,\cdots,X_i\rt)\rt].$$

With the notation $\cg_i^s:=\sigma(Z_1^s,\cdots, Z_i^s),\ \cf_i:=\cb(\br^i),\ i\geq 1$ and $\cg_0^s:=\{\Omega_0,\emptyset\},\ \cf_0:=\{\br,\emptyset\}$, we notice that $E_P\lt[Z_i^s|\cg_{i-1}^s\rt] \stackrel{d}{=}E_{P^s}\lt[X_i|\cf_{i-1}\rt],\ i\geq 1$. Therefore, we conclude that for each $\vp\in C_{{Lip}}(\br)$,
\begin{equation}\label{TAB}
\begin{aligned}
&\lim_{n\to\infty}\sup_{s\in\cs}E_P\lt[\vp\lt(\frac{\sum_{i=1}^{n}Z_i^s-E_P\lt[Z_i^s|\cg_{i-1}^s\rt]}{\sqrt{n}}\rt)\rt]\\
= &\lim_{n\to\infty}\sup_{s\in\cs}E_{P^s}\lt[\vp\lt(\frac{\sum_{i=1}^{n}X_i-E_{P^s}\lt[X_i|\cf_{i-1}\rt]}{\sqrt{n}}\rt)\rt]\\
= &\lim_{n\to\infty}\sup_{P\in\cp}E_{P}\lt[\vp\lt(\frac{\sum_{i=1}^{n}X_i-E_P\lt[X_i|\cf_{i-1}\rt]}{\sqrt{n}}\rt)\rt] =\eg [\vp(\xi)],
\end{aligned}\end{equation}
where $\xi\sim\cn(0,[\ls^2,\os^2])$.

(\ref{TAB}) implies that the limit distribution of the normalized sum of rewards under all strategies can be characterized by the $G$-expectation corresponding to the distributions of random payoffs from both arms. %We can not get the equality because our strategies take discontinues value.
The similar argument still holds for multiple-armed bandit problem.

\subsection{Examples}

The following example is motivated by Teran \cite{Teran}, which shows that CLT also holds without underlying probability measure.
\begin{example}\label{ex1}
Let $(\Omega,\cb(\Omega))$ be a measurable space with $\Omega=0\cup\bn$. Let $X_n(\omega)$ be the $n$-th bit in the binary representation of $\omega$, i.e.,
$$\omega=\sum_{n=1}^\infty 2^{n-1}X_n(\omega)$$
with $X_n(\omega)\in\{0,1\}$.

There does not exist any probability measure $P$ on $\Omega$ such that $\{X_i\}_{i\in\bn}$ is a classical i.i.d. sequence with $E_P[X_1]=\frac{1}{2}$ (see Guo et al. \cite{gll3}). But it is feasible to construct a proper linear expectation space $(\Omega,\mathcal{H},\be)$ such that $\{X_i\}_{i\in\bn}$ is i.i.d. under $\be$ with $\be[X_1]=\frac{1}{2}$.

In particular, by Corollary \ref{cltv}, we have
$$\be\lt[\vp\lt(\frac{\sum_{i=1}^n\lt(X_i-\frac{1}{2}\rt)}{\sqrt{n}}\rt)\rt]=\mathbb{E}_G[\vp(\xi)], \ \ \ \forall \vp\in C_{b.Lip}(\br),$$
where $\xi\sim\mathcal{N}(0,\frac{1}{4})$.

\end{example}

Indeed, for fixed $n\in\bn$, let $P^{(n)}$ be the probability measure on $\Omega$ such that $$P^{(n)}(\{i\})=\frac{1}{2^n},\ 0\leq i\leq 2^n-1, \ \ \  P^{(n)}(\{i\})=0, \ i\geq 2^n. $$
It is clear that $\{X_i\}_{1\leq i\leq n}$ is a classical i.i.d. sequence with $E_{P^{(n)}}[X_1]=\frac{1}{2}$.

 Now we construct the proper $(\Omega,\mathcal{H},\be)$.
  %such that $\{X_i\}_{i\in\bn}$ is i.i.d. as in  Definition \ref{id} and \ref{new-de1}.
 Let
 $$\mathcal{H}=\{\vp(X_1,\cdots,X_n), \forall n\in\bn, \forall \vp\in C_{Lip}(\br^n)\},$$
 the linear functional $\be$ on $\mathcal{H}$ is defined by
$$\be[\vp(X_1,\cdots,X_n)]:=E_{P^{(n)}}[\vp(X_1,\cdots,X_n)],\ \ \forall n\in\bn.$$
It is easily verified that $\be$ is well defined. We have
$$\be[\vp(X_i)]=\frac{1}{2}(\vp(0)+\vp(1)), \ \forall \vp\in C_{b.Lip}(\br), \ \forall i\in\bn,$$
and for each $n\geq 2$ and $\vp\in C_{b.Lip}(\br^n)$,
$$\be[\vp(X_1,\cdots,X_n)]=\be[\be[\vp(x_1,\cdots,x_{n-1},X_n)]_{(x_1,\cdots,x_{n-1})=(X_1,\cdots,X_{n-1})}].$$
We also note that, for fixed $\omega\in\Omega$, $X_n(\omega)\to 0$, but $\be[X_n]=\frac{1}{2}\nrightarrow 0$. Hence $\be$ is not regular. In fact, if the linear functional $\be$ satisfying (a) and (b) in Definition \ref{sub} is regular, then by the Daniell-Stone theorem, there must exist a probability measure represented by $\be$, which is a contradiction.

Next we consider the above example in sublinear expectation space.

\begin{example}
Let $\cp$ be the set of all probability measures on $(\Omega,\cb(\Omega))$ with $\Omega=0\cup\bn$ and $\be[\cdot]=\sup_{P\in\cp}E_P[\cdot]$. Then we have $\{X_i\}_{i\in\bn}$ is an i.i.d. sequence under $\be$ (see Guo et al. \cite{gll}).

It is clear that
$$\be[X_1]=1, \ \ -\be[-X_1]=0, \ \ \overline{V}(X_1)=\frac{1}{4}, \ \ \underline{V}(X_1)=0.$$
%Furthermore, $\be$ is not regular since $\be[X_n]\equiv1$ but $X_n(\omega)\to 0$, $\forall \omega\in\Omega$.

By Corollary \ref{cltv}, for each continuous function $\vp\in C(\br)$ with linear growth,
$$\lim_{n\rightarrow\infty}\sup_{P\in\cp}E_P\lt[\vp\lt(\frac{\sum_{i=1}^n(X_i-E_P[X_i|\cf_{i-1}])}{\sqrt{n}}\rt)\rt]=\eg[\vp(\xi)],$$
where $\xi\sim\mathcal{N}(0,[0,\frac{1}{4}])$.
\end{example}

The last example shows that condition (\ref{uc}) in CLT can not be weaken to $\be[|X_1|^2]<\infty$.
\begin{example}
Let $\Omega = \mathbb{Z}$, $\mathcal{F}=\mathcal{B}(\mathbb{Z})$, $\mathcal{P}=\{P_{k}, k \geq 1\}$, where $P_{k}(\{0\})=1-\frac{1}{k^2}$, $P_{k}(\{k\})=P_{k}(\{-k\})=\frac{1}{2k^2}$. Consider a function $X$ on $\mathbb{Z}$ defined by $X(\omega)=\omega,  \omega\in \mathbb{Z}$ and the sublinear expectation $\be[\cdot]=\sup_{P\in\cp}E_P[\cdot]$. We note that $\be[X]=\be[-X]=0$ and $\be[X^2]=-\be[-X^2]=1$. We are able to construct an i.i.d. sequence $\{X_{i}\}_{i\in\bn}$ on some sublinear expectation space $(\tilde{\Omega},\tilde{\mathcal{H}},\tilde{\mathbb{E}})$ such that $X_{i}$ has the same distribution as $X$. Then CLT does not hold.
\end{example}
In fact, we consider $\varphi(x)=1-|x|$. Let $S_n=X_1+\cdots+X_n$.

We firstly observe that
\begin{align*}\tilde{\mathbb{E}}\lt[\vp\lt(\frac{x+X}{\sqrt{n}}\rt)\rt]&=\sup_{k\in\mathbb{N}}\lt\{\lt(1-\frac{1}{k^2}\rt)\vp\lt(\frac{x}{\sqrt{n}}\rt)+\frac{1}{2k^2}\lt(\vp\lt(\frac{x-k}{\sqrt{n}}\rt)+\vp\lt(\frac{x+k}{\sqrt{n}}\rt)\rt)\rt\}\\
&=\vp\lt(\frac{x}{\sqrt{n}}\rt)+\frac{1}{\sqrt{n}}\sup_{k\in\mathbb{N}}\lt\{\frac{2|x|-|x+k|-|x-k|}{2k^2}\rt\}=\vp\lt(\frac{x}{\sqrt{n}}\rt).
\end{align*}
Then
\begin{align*}
\tilde{\mathbb{E}}\lt[\vp\lt(\frac{S_n}{\sqrt{n}}\rt)\rt]&=\tilde{\mathbb{E}}\lt[\tilde{\mathbb{E}}\lt[\vp\lt(\frac{x+X_n}{\sqrt{n}}\rt)\rt]|_{x=X_1+\cdots+X_{n-1}}\rt]
=\tilde{\mathbb{E}}\lt[\vp\lt(\frac{S_{n-1}}{\sqrt{n}}\rt)\rt]\\
&=\cdots=\tilde{\mathbb{E}}\lt[\vp\lt(\frac{X_1}{\sqrt{n}}\rt)\rt]=\vp(0)=1.
\end{align*}

Finally,
$$\lim_{n\rightarrow\infty}\tilde{\mathbb{E}}\lt[\varphi\lt(\frac{S_n}{\sqrt{n}}\rt)\rt]=1>\eg[\vp(\xi)],$$
where $\xi\sim\mathcal{N}(0,1)$.

%%%%%%%%%%%%%%%%%%%%%%%%%%%%%%%%%%%%%%%%%%%%%%
%% Single Appendix:                         %%
%%%%%%%%%%%%%%%%%%%%%%%%%%%%%%%%%%%%%%%%%%%%%%
%\begin{appendix}
\section*{Appendix: Law of large numbers under sublinear expectation}%% if no title is needed, leave empty \section*{}.
In this section, we firstly consider the law of large numbers on canonical space.

\begin{theorem}
Let $\{X_i\}_{i\in\bn}$ be an independent canonical sequence on $(\br^\bn,\cb(\br^\bn), \cp)$, where $\cp$ is constructed by (\ref{e1}) with given convex and weakly compact sets $\{\cp_i\}_{i\in\bn}$. Assume that $\{X_i\}_{i\in\bn}$ have the same mean-uncertainty, i.e.
$$\ou=\EE[X_n],\ \ \lu=-\EE[-X_n], \ \forall n\in\bn,$$
and satisfy
\begin{equation}\label{uccc}
\lim_{\lambda\to\infty}\sup_{i}\mathbb{E}^{\cp}[(|X_i|-\lambda)^+]=0.
\end{equation}
Then for each $\vp\in C(\br)$ with linear growth, we have
\begin{equation}\label{lln1}
\lim_{n\to\infty}\pe\lt[\vp\lt(\frac{\sum_{i=1}^nX_i}{n}\rt)\rt]=\max_{\lu\leq\mu\leq\ou}\vp(\mu).
\end{equation}

%$$\overline{\mu}=\limsup _{n \rightarrow \infty} \frac{1}{n} \sum_{j=1}^{n} \EE\left[X_{i} \right],\ \ \ \  \underline{\mu}=\liminf _{n \rightarrow \infty} \frac{1}{n} \sum_{j=1}^{n}-\EE\left[-X_{i}\right].$$
In particular,
\begin{equation}\label{lln2}
\lim_{n\to\infty}\pe\lt[d_\Theta\lt(\frac{\sum_{i=1}^nX_i}{n}\rt)\rt]=0,
\end{equation}
where $d_\Theta(x)=\inf_{y\in\Theta}|x-y|$ with $\Theta=[\lu,\ou]$.
\end{theorem}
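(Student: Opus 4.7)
My plan is to split the proof into an upper bound and a lower bound, with a preliminary truncation step to exploit the uniform tail condition (\ref{uccc}). Throughout, the decomposition
\[
\frac{S_n}{n}=\frac{M_n^P}{n}+\frac{A_n^P}{n},\qquad M_n^P:=\sum_{i=1}^n(X_i-E_P[X_i|\cf_{i-1}]),\ \ A_n^P:=\sum_{i=1}^nE_P[X_i|\cf_{i-1}],
\]
will play the central role: $M_n^P$ is a $P$-martingale, while $A_n^P/n$ lies in $[\lu,\ou]$ $P$-a.s.\ because the probability kernels $\kappa_i(x_1,\dots,x_{i-1},\cdot)$ lie in $\cp_i$ and $\lu=-\EE[-X_i]$, $\ou=\EE[X_i]$ are common to all $i$. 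I will first handle the case of $\vp\in C_{b.Lip}(\br)$ with $X_i$ truncated to $|X_i^\lambda|\le \lambda$, and then remove both restrictions using (\ref{uccc}) and the linear growth of $\vp$ (together with a Cauchy--Schwarz / Rosenthal-type control of the error, analogous to the estimate already used in the proof of Theorem \ref{fclt}).

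For the upper bound, I would note that for bounded increments $|X_i|\le\lambda$ the martingale $M_n^P$ satisfies, uniformly in $P\in\cp$,
\[
E_P\!\left[\left|\frac{M_n^P}{n}\right|^2\right]=\frac{1}{n^2}\sum_{i=1}^n E_P\!\left[(X_i-E_P[X_i|\cf_{i-1}])^2\right]\leq\frac{4\lambda^2}{n}.
\]
Pick a modulus of continuity $\omega_\vp$ of $\vp$ on $[\lu-1,\ou+1]$. On the event $\{|M_n^P/n|\le\delta\}$ we have $|\vp(S_n/n)-\vp(A_n^P/n)|\le\omega_\vp(\delta)$; on the complementary event the linear growth of $\vp$ combined with Chebyshev gives an $O(1/(n\delta^2))$ contribution. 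Since $A_n^P/n\in[\lu,\ou]$ $P$-a.s., $\vp(A_n^P/n)\le\max_{\lu\le\mu\le\ou}\vp(\mu)$, and taking $\delta\to 0$ after $n\to\infty$ yields
\[
\limsup_{n\to\infty}\sup_{P\in\cp}E_P\!\left[\vp\!\left(\frac{S_n}{n}\right)\right]\le \max_{\lu\le\mu\le\ou}\vp(\mu).
\]

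For the lower bound, fix $\mu^\ast\in[\lu,\ou]$ achieving $\max_{\lu\le\mu\le\ou}\vp(\mu)$. Since $\lu=\inf_{Q\in\cp_i}E_Q[X_1]$ and $\ou=\sup_{Q\in\cp_i}E_Q[X_1]$, the convexity of $\cp_i$ provides, for each $i$, a measure $Q_i\in\cp_i$ with $E_{Q_i}[X_1]=\mu^\ast$. Choosing in (\ref{e1}) the constant kernels $\kappa_i(x_1,\dots,x_{i-1},\cdot)\equiv Q_i(\cdot)$ and $\mu_1=Q_1$ defines a measure $P_\ast\in\cp$ under which $\{X_i\}$ are independent with common mean $\mu^\ast$ and uniformly integrable (by (\ref{uccc})). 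The classical weak law then gives $S_n/n\to\mu^\ast$ in $P_\ast$-probability, and uniform integrability upgrades this to $E_{P_\ast}[\vp(S_n/n)]\to\vp(\mu^\ast)$, proving the matching lower bound. This establishes (\ref{lln1}).

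Statement (\ref{lln2}) is then immediate: $d_\Theta$ is continuous with linear growth and vanishes on $\Theta=[\lu,\ou]$, so $\max_{\lu\le\mu\le\ou}d_\Theta(\mu)=0$, and (\ref{lln1}) applied to $\vp=d_\Theta$ gives the result. The main technical difficulty I anticipate is the uniformity in $P\in\cp$ of the upper bound: I must control the truncation error $\sup_{P\in\cp}E_P[|\vp(S_n/n)-\vp(S_n^\lambda/n)|]$ independently of $P$ using only the sublinear assumption (\ref{uccc}), which is exactly where a martingale moment inequality of Rosenthal type (as invoked already in Theorem \ref{fclt}) is needed.
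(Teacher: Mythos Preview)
Your proposal is correct and follows essentially the same route as the paper's proof: a Doob-type decomposition of $S_n/n$ into a martingale part (controlled in $L^2$ after truncation) and a predictable part confined to $[\lu,\ou]$, together with a lower bound obtained by choosing constant kernels $Q_i\in\cp_i$ with mean $\mu^\ast$ and invoking the classical weak law. One small simplification: for the truncation error you do not need a Rosenthal-type inequality here, since dividing by $n$ (rather than $\sqrt{n}$ as in Theorem~\ref{fclt}) already gives $E_P[|\vp(S_n/n)-\vp(S_n^\lambda/n)|]\le L_\vp\sup_i\pe[(|X_i|-\lambda)^+]$ directly from the triangle inequality, which is exactly the bound the paper uses.
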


\begin{proof}
It is clear that (\ref{lln2}) is implied from (\ref{lln1}) by taking $\vp(x)=d_\Theta(x)$. To prove (\ref{lln1}), we only need to prove it for $\vp\in C_{b,Lip}(\br)$. Indeed, since
\begin{align*}
\pe\lt[\lt(\lt|\frac{S_n}{n}\rt|-\lambda\rt)^+\rt]\leq\frac{1}{n}\pe[(\sum_{i=1}^n|X_i|-\lambda)^+]\leq\sup_i\pe[(|X_i|-\lambda)^+]\to 0,
\end{align*}
we can extend (\ref{lln1}) from $C_{b,Lip}(\br)$ to $C(\br)$ with linear growth by the similar argument in Peng \cite{P2019}.

The proof of (\ref{lln1}) for $\vp\in C_{b.Lip}(\br)$ is divided into two steps.

Firstly, let $\tX_k=(-n\vee X_k)\wedge n$, $1\leq k\leq n$, $\ou_n=\pe[\tX_n]$ and $\lu_n=-\pe[-\tX_n]$, $\forall n\in\bn$. We denote $\{\cf_i\}_{i\in\bn}$ the natural filtration generalized by $\{X_i\}_{i\in\bn}$ and  $S_n=\sum_{i=1}^nX_i$, $\tilde{S}_n=\sum_{i=1}^n\tX_i$.

For each $P\in\cp$, we have
\begin{align*}
&E_P\lt[\lt|\vp\lt(\frac{S_n}{n}\rt)-\vp\lt(\frac{\tilde{S}_n}{n}\rt)\rt|\rt]\\
\leq &\frac{L_\vp}{n}E_P\left[\left|\sum_{i=1}^n(X_i-\tX_i)\right|\right]\\
\leq& L_\vp\sup_i\pe[(|X_i|-n)^+]\to 0,
\end{align*}
and
\begin{align*}
&E_P\lt[\lt|\vp\lt(\frac{\sum_{i=1}^n\tilde{S}_n}{n}\rt)-\vp\lt(\frac{\sum_{i=1}^nE_P[\tX_i|\cf_{i-1}]}{n}\rt)\rt|\rt]\\
\leq& \frac{L_\vp}{n}E_P\lt[\lt|\sum_{i=1}^n(\tX_i-E_P[\tX_i|\cf_{i-1}])\rt|\rt]\\
\leq&\frac{L_\vp\sqrt{\sum_{i=1}^nE_P[(\tX_i-E_P[\tX_i|\cf_{i-1}])^2]}}{n}\\
\leq &\frac{2L_\vp\sqrt{\sum_{i=1}^nE_P[\tX_i^2]}}{n}\to 0
\end{align*}
where $L_\vp$ is the Lipschitz constant of $\vp$. The proof of the last convergence is very similar to the classical case.

It is clear that $\ou_n\to\ou$ and $\lu_n \to \lu$ by (\ref{uccc}).
%$\ou_n\rightarrow\ou$ and $\lu_n\to\lu$ by (\ref{uccc}).
Combining with Proposition 2.4 in Guo et al. \cite{GL},  we imply that for each $\ve>0$, there exists $N$ large enough such that for every $n>N$,
$$\vp\lt(\frac{\sum_{i=1}^nE_P[\tX_i|\cf_{i-1}]}{n}\rt)\leq\max_{\frac{\sum_{i=1}^n\lu_i}{n}\leq\mu\leq\frac{\sum_{i=1}^n\ou_i}{n}}\vp(\mu) \to\max_{\lu\leq\mu\leq\ou}\vp(\mu).
$$

Thus we can conclude that
$$\limsup_{n\to\infty}\pe\lt[\vp\lt(\frac{S_n}{n}\rt)\rt]\leq\max_{\lu\leq\mu\leq\ou}\vp(\mu).$$

Secondly, for fixed $\vp\in C_{b.Lip}(\br)$, there exists $\mu^*\in[\lu,\ou]$ such that $$\vp(\mu^*)=\max_{\lu\leq\mu\leq\ou}\vp(\mu).$$
There exists $P_*\in\cp$ such that $\{X_i\}_{i\in\bn}$ is an independent sequence with
$E_{P^*}[X_i]=\mu^*, \ \forall i\in\bn.$
 The classical law of large numbers shows that
$$\lim_{n\to\infty}E_{P^*}\lt[\vp\lt(\frac{S_n}{n}\rt)\rt]=\vp(\mu^*),$$
which implies that
$$\liminf_{n\to\infty}\pe\lt[\vp\lt(\frac{S_n}{n}\rt)\rt]\geq\max_{\lu\leq\mu\leq\ou}\vp(\mu).$$
The proof is completed.

\end{proof}

By Remark \ref{r48}, we extend the law of large numbers from canonical space to the sublinear expectation space, which slightly improves the law of large numbers in Peng \cite{P2019}.
\begin{theorem}
Let $\{X_i\}_{i\in\bn}$ be an independent sequence on sublinear expectation space $(\Omega,\mathcal{H},\be)$ with same mean-uncertainty and $$\lim_{\lambda\rightarrow\infty}\sup_i\be[(|X_i|-\lambda)^+]=0$$
%$$\EE[X_n]=\EE[X_1]:=\ou,\ \ -\EE[-X_n]=-\EE[-X_1]:=\lu, \ \forall n\in\bn. $$

%$$\lim_{\lambda\rightarrow\infty}\be[(|X_1|-\lambda)^+]=0.$$
For each $\vp\in C(\br)$ with linear growth, we have
$$\lim_{n\to\infty}\be\lt[\vp\lt(\frac{S_n}{n}\rt)\rt]=\max_{\lu\leq\mu\leq\ou}\vp(\mu),$$
where
%$$
%\overline{\mu}=\limsup _{n \rightarrow \infty} \frac{1}{n} \sum_{j=1}^{n} \EE\left[X_{i} \right],\ \ \ \  \underline{\mu}=\liminf _{n \rightarrow \infty} \frac{1}{n} \sum_{j=1}^{n}-\EE\left[-X_{i}\right].
%$$
$\lu=-\be[-X_1]$ and $\ou=\be[X_1]$.
\end{theorem}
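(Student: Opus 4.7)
My plan is to reduce the statement on the abstract sublinear expectation space $(\Omega,\mathcal{H},\be)$ to the canonical-space version of the law of large numbers that was just proved in the Appendix. The bridge is a representation identity in the spirit of Proposition \ref{l-m} and Remark \ref{r48}, but adapted to the merely independent (not necessarily identically distributed) setting.

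First, for every $i\in\bn$ I would introduce the marginal sublinear distribution of $X_i$,
$$\cp_i:=\{\mu\in\mathcal{M}(\br):E_\mu[\vp]\leq\be[\vp(X_i)],\ \forall\vp\in C_{Lip}(\br)\}.$$
The uniform tail condition $\lim_{\lambda\to\infty}\sup_i\be[(|X_i|-\lambda)^+]=0$ ensures that each $\cp_i$ is convex and weakly compact, and the common mean-uncertainty hypothesis yields $-\mathbb{E}^{\cp_i}[-Y]=\lu$ and $\mathbb{E}^{\cp_i}[Y]=\ou$ for the canonical coordinate $Y$. Using $\{\cp_i\}_{i\in\bn}$ in the kernel construction \eqref{e1}, I obtain a set $\cp$ of probability measures on $(\br^\bn,\cb(\br^\bn))$ making the canonical sequence $\{Y_i\}_{i\in\bn}$ independent with the desired uniform integrability, so that the canonical-space theorem proved above applies.

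Next, the key step is the identification
\begin{equation*}
\be[\vp(X_1,\ldots,X_n)]=\mathbb{E}^{\cp}[\vp(Y_1,\ldots,Y_n)],\qquad\forall\vp\in C_{Lip}(\br^n),\ \forall n\in\bn.
\end{equation*}
I would prove this by induction on $n$. The base case $n=1$ is exactly the definition of $\cp_1$. For the inductive step, I use the independence of $X_n$ from $(X_1,\ldots,X_{n-1})$ (Definition \ref{new-de1}) to write $\be[\vp(X_1,\ldots,X_n)]=\be[\psi(X_1,\ldots,X_{n-1})]$ with $\psi(x_1,\ldots,x_{n-1})=\be[\vp(x_1,\ldots,x_{n-1},X_n)]=\sup_{\kappa\in\cp_n}\int\vp(x_1,\ldots,x_{n-1},y)\,\kappa(dy)$; then I apply the inductive hypothesis to $\psi$ and match with the kernel representation of measures in $\cp$, using weak compactness of $\cp_n$ to attain the sup by a measurable kernel selection. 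Once this identity is in hand, for $\vp\in C_{b.Lip}(\br)$ the function $(x_1,\ldots,x_n)\mapsto\vp((x_1+\cdots+x_n)/n)$ lies in $C_{Lip}(\br^n)$, so
$$\be\!\left[\vp\!\left(\frac{S_n}{n}\right)\right]=\mathbb{E}^{\cp}\!\left[\vp\!\left(\frac{Y_1+\cdots+Y_n}{n}\right)\right],$$
and the canonical LLN gives the limit $\max_{\lu\leq\mu\leq\ou}\vp(\mu)$.

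Finally, to extend from $\vp\in C_{b.Lip}(\br)$ to continuous $\vp$ with linear growth, I would repeat verbatim the truncation step from the canonical proof: the bound $\be[(|S_n/n|-\lambda)^+]\leq\sup_i\be[(|X_i|-\lambda)^+]\to 0$ uniformly in $n$ shows that tails are negligible, and standard approximation by $C_{b.Lip}$ functions (as in Lemma 2.4.12 of Peng \cite{P2019}) transfers the convergence. The main obstacle I anticipate is step (ii), the representation identity, because Proposition \ref{l-m} was proved only under the i.i.d.\ assumption and its weak-compactness argument has to be redone kernel by kernel; however, the convexity and weak compactness of each individual $\cp_i$, together with a measurable-selection argument for the suprema defining the iterated sublinear expectation, should make the induction go through cleanly.
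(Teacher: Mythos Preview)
Your approach is correct and is essentially the same as the paper's: the paper simply writes ``By Remark \ref{r48}, we extend the law of large numbers from canonical space to the sublinear expectation space,'' i.e., it invokes the representation identity $\be[\vp(X_1,\ldots,X_n)]=\mathbb{E}^{\cp}[\vp(Y_1,\ldots,Y_n)]$ for $\vp\in C_{Lip}(\br^n)$ (attributed there to Theorem 3.1 in \cite{gll3}) and then applies the canonical-space LLN just proved. You are filling in exactly these details, and you are right to flag that Remark \ref{r48} is stated for i.i.d.\ sequences whereas the present theorem assumes only independence with common mean-uncertainty; your inductive proof of the identity, using Definition \ref{new-de1} together with weak compactness of each $\cp_i$ and a measurable-kernel selection, is the natural way to close that gap.
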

%\end{appendix}
%%%%%%%%%%%%%%%%%%%%%%%%%%%%%%%%%%%%%%%%%%%%%%
%% Multiple Appendixes:                     %%
%%%%%%%%%%%%%%%%%%%%%%%%%%%%%%%%%%%%%%%%%%%%%%
%\begin{appendix}
%\section{???}
%
%\section{???}
%
%\end{appendix}

%%%%%%%%%%%%%%%%%%%%%%%%%%%%%%%%%%%%%%%%%%%%%%
%% Support information, if any,             %%
%% should be provided in the                %%
%% Acknowledgements section.                %%
%%%%%%%%%%%%%%%%%%%%%%%%%%%%%%%%%%%%%%%%%%%%%%
\section*{Acknowledgments}
The author gratefully acknowledges the incredibly helpful suggestions from Prof. Shige PENG during the preparation of the paper.

This work was supported by NSF of Shandong Provence (No.ZR2021MA018),  NSF of China (No.11601281),  National Key R\&D Program of China (No.2018YFA0703900) and the Qilu Young Scholars Program of Shandong University.

\end{document}